
\documentclass[11pt,a4paper]{article}

\usepackage{amsmath}
\usepackage{amsfonts}
\usepackage{amsthm}
\usepackage[pdftex]{color,graphicx}
\usepackage[colorlinks,linkcolor=black,citecolor=black,urlcolor=black]{hyperref}

\newtheorem{Theorem}{Theorem}

\newtheorem{Proposition}{Proposition}
\newtheorem{Lemma}{Lemma}
\newtheorem{Remark}{Remark}
\newtheorem{Example}{Example}




\newcommand{\dt}{{\mathrm{d}}t}
\newcommand{\ds}{{\mathrm{d}}s}
\newcommand{\dx}{{\mathrm{d}}x}
\newcommand{\Id}{{\mathbf{1}}}
\newcommand{\dom}{{\mathrm{dom}~}}



\setlength{\paperheight}  {300mm}   
\setlength{\paperwidth}   {260mm}   
\setlength{\voffset}{-17mm} \setlength{\hoffset}{-8mm}
\setlength{\textheight}{240mm}  
\setlength{\textwidth}{180mm}   
\setlength{\unitlength}{1cm}
\setlength{\baselineskip}{1.5\baselineskip}
\setlength{\parskip}{0.1\baselineskip}  


\begin{document}
	
\title{\bf\Large EXISTENCE OF DISCRETE EIGENVALUES FOR THE DIRICHLET LAPLACIAN IN A TWO-DIMENSIONAL TWISTED STRIP}
\author{\bf \normalsize RAFAEL T. AMORIM\thanks{The first author was supported by CNPq, Brazil (grant 141842/2019-9).}\;  and ALESSANDRA A. VERRI}


\date{\today}

\maketitle 


\begin{abstract}
We study the spectrum of the Dirichlet Laplacian operator in
twisted strips on ruled surfaces in any space dimension.
It is shown that a suitable twisted effect can create discrete eigenvalues for the operator.
In particular, we also study the case where the twisted effect
``grows'' at infinity while the width of the strip goes to zero. In this situation, 
we find an asymptotic behavior for the eigenvalues.
\end{abstract}

\

\noindent {\bf Mathematics Subject Classification (2020).} Primary: 81Q10; Secondary: 35P15, 47F05, 58J50.

\

\noindent    {\bf Keywords.} Dirichlet Laplacian,  unbounded strips, twisted strips, discrete eigenvalues.




\section{Introduction}

Let $\Omega$ be a strip on a surface in $\mathbb R^d$, $d \geq 2$, and denote by $-\Delta_\Omega^D$ 
the Dirichlet Laplacian operator in $\Omega$. 
A problem extensively studied in the literature is to find spectral information of $-\Delta_\Omega^D$. 
If $\Omega$ is a bounded strip, it is known that the spectrum $\sigma(-\Delta_\Omega^D)$ is 
purely discrete. 
Otherwise, 
the existence of discrete eigenvalues is a non-trivial property and it
depends on the 
geometry of $\Omega$
\cite{bori1, bori2, duclosfk, clark, duclos, pavelduclos,  
exnerkovarik, bookexner, solomyak, friedsol, friedlander, davidsurf, davidhardy, davidruled, davidsedi, renger}.

At first, consider the following class of planar strips.
Let $r: \mathbb R \to \mathbb R^2$ be a $C^\infty$ curve parameterized by its arc-length $s$
and denote by $k(s)$ its curvature at the point $r(s)$. Consider the case where
$\Omega$ is obtained by moving a bounded 
segment $(c,d)$ along $r$ with respect to its normal vector field.
In the pioneering paper \cite{exnerseba},
the authors studied the spectral problem of $-\Delta_\Omega^D$. In particular, 
they proved the existence of discrete spectrum for the operator on the 
conditions that $k(s)\neq0$, for some $s \in \mathbb R$, and that $k(s)$ decays fast enough at infinity.
In addition, 
the authors assumed some regularity for $r$ and that $d-c$ is small enough.
In subsequent studies, the results were 
improved and generalized \cite{duclos, gold, davidkr}.
In \cite{gold}, the authors proved
the existence of discrete eigenvalues without the restriction on the width
of the strip.
In \cite{duclos} the authors estimated the number of discrete eigenvalues. 
They proved that this number is finite and bounded for a constant that does not depend on the width of $\Omega$.
In \cite{davidkr}, the authors proved the existence of discrete spectrum on the conditions 
that $r$ is of class $C^2$, $k(s)\neq0$, and $k(s) \to 0$, as $|s|\to \infty$;
we also emphasize that  
the authors 
made an overview of some new and old results on spectral properties
of $-\Delta_\Omega^D$, including other boundary conditions in $\partial \Omega$.

In  \cite{davidbriet}, the authors
introduced a new, two-dimensional model of strip to study the spectral problem of $-\Delta_\Omega^D$.
In that work,
$\Omega$ is a strip in $\mathbb R^2$ 
which is built by translating a segment oriented in a constant direction along an unbounded curve in the plane.
The spectrum of the operator $-\Delta_\Omega^D$ was carefully studied and the model 
covers different effects: purely essential spectrum, discrete spectrum or a combination of both.

One can consider strips embedded in a Riemannian manifold instead of the Euclidean space. For example,
suppose that $\Omega$ is a strip of constant width which is defined as a tubular neighborhood
of an infinite curve in a two-dimensional Riemannian manifold. This situation was considered in 
\cite{davidsurf}.
In particular, the author proved that the discrete spectrum of $-\Delta_\Omega^D$ 
is nonempty for non-negatively curved strips.

Strips on ruled surfaces are also regions of great interest \cite{davidhardy, davidruled, davidmainpaper, aleverri}; roughly speaking, 
a ruled surface is generated by straight lines translating along a curve in the
Euclidean space.
Consider the case where $\Omega$ is a
twisted strip composed of segments
translated along the straight line in $\mathbb R^3$ with respect to a rotation angle.
In \cite{davidruled}, assuming that the twisted effect diverges at infinity, the authors
studied the spectral problem of $-\Delta_\Omega^D$. In particular, they proved that this kind of geometry
can create discrete eigenvalues. In a similar situation,
in \cite{aleverri} was proved that
the discrete spectrum of $-\Delta_\Omega^D$ is nonempty since the twisted effect ``grows'' at infinity
while the width of $\Omega$ goes to zero.

Some results for strips on ruled surfaces can be extended to higher dimensions. In other words,
consider $\Omega$ as a twisted and bent two-dimensional strip embedded in $\mathbb R^d$ with $d \geq 3$.
In \cite{davidmainpaper}, the spectrum of the Dirichlet Laplacian $-\Delta_\Omega^D$ was carefully studied. The authors also proved that,
in the limit when the width of the strip tends to zero,
the Dirichlet Laplacian converges in the norm resolvent sense to a one-dimensional Schr\"odinger operator whose
potential depends on the deformations of twisting and bending.
An interesting point is that the geometric construction of those strips were performed with a 
{\it relatively parallel adapted frame} instead of a Frenet frame. In fact, it is known that 
a Frenet frame of a curve does not need to exist. However, the authors proved that a 
relatively parallel adapted frame always exists for an arbitrary curve. 
The goal of this work is to find additional 
information about the spectrum of the Dirichlet Laplacian in this situation.
In the next paragraphs, we present the formal construction of the
strip and more details of the problem.

Let $\Gamma: \mathbb R \to \mathbb R^{n+1}$, $n \geq 1$, be a curve of class $C^{1,1}$
parameterized by its arc-length $s$, i.e., $|\Gamma'(s)|=1$, for all $s \in \mathbb R$.
The vector $T(s):=\Gamma'(s)$ denotes its unitary tangent vector at the point  $\Gamma(s)$. Note that $T$ is a 
locally Lipschitz continuous function which
is differentiable almost everywhere in $\mathbb R$. The number $k(s):=|\Gamma''(s)|$, $s \in \mathbb R$,
is called the curvature of $\Gamma$ at the position $\Gamma (s)$.
In Appendix A of \cite{davidmainpaper}, the authors proved the existence of a relatively parallel adapted frame for 
the curve $\Gamma$. More exactly, it was shown that there exist $n$ almost-everywhere differentiable
normal vector fields $N_1, \cdots, N_n$ so that
\[
\left(
\begin{array}{c}
T \\
N_1 \\
\vdots \\
N_n
\end{array}\right)' 
=
\left(
\begin{array}{cccc}
0      & k_1    & \cdots & k_n \\
-k_1   & 0      & \cdots & 0   \\
\vdots & \vdots & \ddots & \vdots \\
-k_n   & 0      & \cdots &  0
\end{array}\right)
\left(
\begin{array}{c}
T \\
N_1 \\
\vdots \\
N_n
\end{array}\right),\]
where $k_j:\mathbb R \to \mathbb R$, $j \in \{1, \cdots, n\}$, are locally bounded functions.
In particular, the vector $(k_1, \cdots, k_n)$ satisfies
$k_1^2+ \cdots + k_n^2=k^2$.

Now, let $\Theta_j: \mathbb R \to \mathbb R$, $j \in \{1, \cdots, n\}$, be functions of class $C^{0,1}$ so that
\begin{equation}\label{condthetainnorm}
\Theta_1^2 + \cdots + \Theta_n^2 = 1,
\end{equation}
and define
\begin{equation}\label{normalfield}
N_\Theta: = \Theta_1 N_1 + \cdots + \Theta_n N_n.
\end{equation}

Consider the region
\begin{equation}\label{maindomain}
\Omega_\varepsilon := \{\Gamma(s) + N_\Theta(s) \, \varepsilon \, t: (s,t) \in \mathbb R \times (-1, 1)\};
\end{equation}
$\Omega_\varepsilon$ is obtained by translating the segment $(-\varepsilon, \varepsilon)$ along $\Gamma$ with respect to a normal field
(\ref{normalfield}).
Let $-\Delta_{\Omega_\varepsilon}^D$ be the Dirichlet Laplacian operator in $\Omega_\varepsilon$. More precisely,
$-\Delta_{\Omega_\varepsilon}^D$ is defined as the self-adjoint operator associated with the quadratic form
\begin{equation}\label{qfiqldef}
a_\varepsilon(\varphi) = \int_{\Omega_\varepsilon} |\nabla \varphi|^2 \dx, \quad 
\dom a_\varepsilon = H_0^1(\Omega_\varepsilon);
\end{equation}
$\nabla \varphi$ denotes the gradient of $\varphi$
corresponding to the metric induced by the immersion defined in (\ref{immeintr}), Section \ref{secaochange}.
For simplicity, we denote $-\Delta_\varepsilon : = -\Delta_{\Omega_\varepsilon}^D$.

Define the function $\Theta: \mathbb R \to \mathbb R^{n}$,
\begin{equation*}\label{defthetaincoord}
\Theta(s):= (\Theta_1(s), \cdots, \Theta_n(s)),
\end{equation*}
and write $|\Theta'(s)| := (\Theta_1^{'2}(s) + \cdots + \Theta_n^{'2}(s))^{1/2}$. 
Note that $\Theta \in C^{0,1}(\mathbb R; \mathbb R^{n})$.
As in \cite{davidmainpaper}, $\Theta$ is called {\it twisting vector};
if $\Theta'=0$, $\Omega_\varepsilon$ is called 
{\it untwisted} or {\it purely bent} strip; if $k \cdot \Theta := k_1 \Theta_1 + \cdots + k_n \Theta_n = 0$, $\Omega_\varepsilon$ is called 
{\it unbent} or {\it purely twisted} strip.
Geometrically, interpreting $\Gamma$ as a curve in $\Omega_\varepsilon$, 
$k \cdot \Theta$ is the {\it geodesic curvature} of $\Gamma$;
$-|\Theta'|^2/(1+|\Theta'|^2 \varepsilon^2 t^2)^2$ is the {\it Gauss curvature} of $\Omega_\varepsilon$.
One can see \cite{davidmainpaper} for a detailed geometric description of $\Omega_\varepsilon$.

Let $(\pi/2\varepsilon)^2$ be the first eigenvalue of the Dirichlet Laplacian 
$-\Delta_{(-\varepsilon,\varepsilon)}^D$ in $L^2(-\varepsilon,\varepsilon)$.
In \cite{davidmainpaper}, the authors presented a detailed study of the spectrum of $-\Delta_\varepsilon$. 
In particular,
under the conditions 
$k \cdot \Theta,  |\Theta'| \in L^\infty(\mathbb R)$, 
$\varepsilon \| k \cdot \Theta \|_{L^\infty(\mathbb R)} <1$,
and $(k \cdot \Theta)(s) \to 0$, $|\Theta'(s)| \to 0$, as $|s| \to \infty$,
they proved that: 

\vspace{0.2cm}
\noindent
(i) $\sigma_{ess}(-\Delta_\varepsilon) = [(\pi/2\varepsilon)^2, \infty)$;


\vspace{0.2cm}
\noindent
(ii) if $\Theta'=0$ and $k \cdot \Theta \neq 0$, then the discrete spectrum of $-\Delta_\varepsilon$ is nonempty;

\vspace{0.2cm}
\noindent
(iii) if $k \cdot \Theta =0$ and $\Theta' \neq 0$ with $\varepsilon \|\Theta'\|_{L^\infty(\mathbb R)} \leq \sqrt{2}$,
then the discrete spectrum of $-\Delta_\varepsilon$ is empty.

\vspace{0.2cm}
\noindent


The conditions in (i) and (iii) show that a local twisted effect does not create discrete eigenvalues for 
$-\Delta_\varepsilon$.
Then, 
a natural question is to know if some appropriated twisted effect can be used to create discrete eigenvalues for the operator.
This problem
is the subject of this work.

In the first part of this paper, we assume that 
\begin{equation}\label{mainassuption}
\Theta \in C^{1,1}(\mathbb R; \mathbb R^n), \quad
(k \cdot \Theta)(s) = 0, \quad \hbox{and} \quad |\Theta'(s)| = \gamma - \beta(s), \quad
\forall s \in \mathbb R,
\end{equation}
where $\gamma$ is a positive number, and $\beta : \mathbb R \to \mathbb R$ is a continuous, almost-everywhere differentiable function with compact support so that $\beta' \in L^\infty(\mathbb R)$.
Geometrically, the second and third conditions in (\ref{mainassuption}) mean that $\Omega_\varepsilon$ is
a purely twisted strip; the properties of the function $\beta(s)$
imply that the twisted effect locally slows down. 
In this situation, we will find information about the spectrum of $-\Delta_\varepsilon$.
In particular, we will show that the conditions in (\ref{mainassuption}) 
can create discrete eigenvalues for $-\Delta_\varepsilon$. 
Note that the condition $k \cdot \Theta = 0$ does not necessarily  implies that $\Gamma$ is a straight line.

At first, we study the essential spectrum of $-\Delta_\varepsilon$. The strategy
is based on a
direct integral decomposition of the operator; see Section \ref{secessentspectrum}.
In particular, consider the one-dimensional operator
\begin{equation}\label{onedimopp0}
D_\varepsilon (0):= - \frac{\partial_t^2}{\varepsilon^2} + Y_\varepsilon^0(t), \quad \
\dom D_\varepsilon (0) = H_0^1(-1, 1) \cap H^2(-1, 1),
\end{equation}
where $\partial_t:=\partial/\partial t$,
\begin{equation}\label{fiberp0not}
Y_\varepsilon^0(t) := - \frac{3 \gamma^4 \varepsilon^2 t^2}{4 h_\varepsilon^4(t)} + 
\frac{\gamma^2}{2h_\varepsilon^2(t)}, \quad
h_\varepsilon(t):= \sqrt{1+ \gamma^2 \varepsilon^2 t^2}.
\end{equation}
Since $Y_\varepsilon^0 \in C^\infty[-1,1]$, $D_\varepsilon(0)$ has compact resolvent.
Denote by $\lambda_{\varepsilon,1}(0)$ its first eigenvalue and by $u_{\varepsilon, 1}^0$ the corresponding 
orthonormal eigenfunction; $\lambda_{\varepsilon, 1}(0)$ is simple.
Take $\varepsilon_0 > 0$ so that 
\begin{equation}\label{poseigenpot}
Y_\varepsilon^0 > 0, \quad \forall \varepsilon \in (0, \varepsilon_0).
\end{equation} 
Thus, for each $\varepsilon \in (0, \varepsilon_0)$,  $u_{\varepsilon, 1}^0$ can be chosen to be real and positive in $(-1,1)$;
see, e.g., Chapter 6 of \cite{evans} for more details.
The condition in (\ref{poseigenpot}) will be useful in the proof of Proposition \ref{proposition2} in Section \ref{secessentspectrum}.

Now, we have conditions to state the following result.
\begin{Theorem}\label{essentialspectheo}
Assume the conditions (\ref{condthetainnorm}), (\ref{mainassuption}) and (\ref{poseigenpot}). Then,
\[\sigma_{ess} (-\Delta_\varepsilon) = [\lambda_{\varepsilon, 1}(0), \infty).\]
\end{Theorem}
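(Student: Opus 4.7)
The plan is to implement the strategy signaled before the statement: flatten the strip to $\mathbb R\times(-1,1)$ via curvilinear coordinates, isolate the translation-invariant ``reference'' operator that corresponds to $\beta\equiv0$, compute its spectrum by direct integral in $s$, and then use that $\beta$ has compact support to deduce stability of the essential spectrum. In Step~1, the immersion $F(s,t)=\Gamma(s)+\varepsilon t\,N_\Theta(s)$ induces a metric in which the condition $k\cdot\Theta=0$ produces the block-diagonal form $g_{ss}=1+\varepsilon^2 t^2|\Theta'(s)|^2$, $g_{tt}=\varepsilon^2$, $g_{st}=0$, with Jacobian $\varepsilon\, h_\varepsilon^{\beta}(s,t):=\varepsilon\sqrt{1+\varepsilon^2 t^2(\gamma-\beta(s))^2}$. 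Applying the unitary $\varphi\mapsto(\varepsilon h_\varepsilon^\beta)^{1/2}\varphi$ from $L^2(\Omega_\varepsilon)$ onto $L^2(\mathbb R\times(-1,1),\,ds\,dt)$ turns $a_\varepsilon$ into the quadratic form of a self-adjoint operator $\tilde H_\varepsilon$ on the model strip whose coefficients depend on $s$ only through $\beta(s)$ and $\beta'(s)$.

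In Step~2 I would put $\beta\equiv 0$ everywhere in the coefficients of Step~1 to obtain a reference operator $\tilde H_\varepsilon^{0}$ that is translation invariant in $s$. Fourier transform in $s$ then produces the direct integral
\[
\tilde H_\varepsilon^{0}=\int^{\oplus}_{\mathbb R}D_\varepsilon(\xi)\,d\xi,\qquad D_\varepsilon(\xi)=-\frac{\partial_t^{2}}{\varepsilon^{2}}+\frac{\xi^{2}}{h_\varepsilon^{2}(t)}+Y_\varepsilon^{0}(t),
\]
with Dirichlet conditions at $t=\pm 1$; at $\xi=0$ this recovers exactly (\ref{onedimopp0}). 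A min--max argument shows that $\xi\mapsto\lambda_{\varepsilon,1}(\xi)$ is continuous, even, minimized at $\xi=0$ (the fiber potential $\xi^{2}/h_\varepsilon^{2}(t)$ is nonnegative and grows in $|\xi|$), and tends to $+\infty$ as $|\xi|\to\infty$. Consequently $\sigma(\tilde H_\varepsilon^{0})=\bigcup_{\xi\in\mathbb R}\sigma(D_\varepsilon(\xi))=[\lambda_{\varepsilon,1}(0),\infty)$, and this spectrum is purely essential because no fiber eigenvalue is isolated.

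In Step~3 I would compare $\tilde H_\varepsilon$ with $\tilde H_\varepsilon^{0}$. Since $\beta$ and $\beta'$ are supported in a common compact set $K\subset\mathbb R$, the difference of the two quadratic forms is supported in the bounded rectangle $K\times(-1,1)$ and involves only bounded coefficients multiplying $\varphi$, $\partial_s\varphi$ and $\partial_t\varphi$. Combining the compact embedding $H^{1}(K\times(-1,1))\hookrightarrow L^{2}(K\times(-1,1))$ with the fact that the form domain of both operators sits inside $H^{1}_{0}(\mathbb R\times(-1,1))$, a standard argument shows that $(\tilde H_\varepsilon+1)^{-1}-(\tilde H_\varepsilon^{0}+1)^{-1}$ is compact. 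Weyl's theorem then gives $\sigma_{ess}(\tilde H_\varepsilon)=\sigma_{ess}(\tilde H_\varepsilon^{0})=[\lambda_{\varepsilon,1}(0),\infty)$, and unitary equivalence transports the conclusion to $-\Delta_\varepsilon$.

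The main obstacle is the bookkeeping in Step~3: after expanding $\tilde H_\varepsilon-\tilde H_\varepsilon^{0}$ one gets several mixed terms containing $\beta$, $\beta'$ and both derivatives $\partial_s,\partial_t$, and each has to be shown to be infinitesimally form-bounded and, crucially, localized in $K\times(-1,1)$ so that sandwiching by a resolvent factors through the compact embedding. If this route turns out to be heavy, an efficient alternative is to prove the two inclusions separately: Persson's formula for $\inf\sigma_{ess}(-\Delta_\varepsilon)$ applied to cut-offs away from $K$ yields the lower bound $\lambda_{\varepsilon,1}(0)$, while for each $\lambda\ge\lambda_{\varepsilon,1}(0)$ one builds an explicit Weyl sequence of the form $u_{\varepsilon,1}^{0}(t)\,e^{i\xi s}\,\chi_{n}(s)$ with $\chi_n$ supported on translated shells tending to infinity; this is the place where the positivity (\ref{poseigenpot}) of $u_{\varepsilon,1}^{0}$ announced for Proposition \ref{proposition2} enters naturally.
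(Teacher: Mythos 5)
Your proposal is correct and reaches the stated identity, but the way you handle the two main steps differs from the paper in instructive ways. For the reference (translation-invariant) operator, you bound the fibers from below by the monotonicity $D_\varepsilon(\xi)=D_\varepsilon(0)+\xi^{2}/h_\varepsilon^{2}\geq D_\varepsilon(0)$ and min--max, which immediately gives $\lambda_{\varepsilon,1}(\xi)\geq\lambda_{\varepsilon,1}(0)$ and hence $\sigma(\tilde H_\varepsilon^{0})=[\lambda_{\varepsilon,1}(0),\infty)$; the paper instead proves the lower bound by a two-dimensional ground-state substitution $\psi=\phi\,u_{\varepsilon,1}^{0}$ in Proposition \ref{proposition2}, which is where the positivity hypothesis (\ref{poseigenpot}) is actually used, and it establishes the covering of the half-line via real analyticity of $\lambda_{\varepsilon,n}(\cdot)$ (a type A family, Lemma \ref{limiteautovalores}) rather than mere continuity --- your simpler route is legitimate, and indeed makes (\ref{poseigenpot}) inessential for this theorem. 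For the stability step, you propose compactness of the resolvent difference plus Weyl's theorem, localizing the difference of forms in $[-s_0,s_0]\times(-1,1)$ and factoring through the Rellich embedding; the paper instead proves Proposition \ref{proposition1} through the form-level characterization of the essential spectrum by singular sequences whose supports escape to infinity (Lemma \ref{caracterizacaoespectro}, adapted from \cite{davidlu, davidbriet, durand}), so it never needs any compactness of resolvent differences: since $c_\varepsilon$ and $d_\varepsilon$ coincide outside a compact set, the same singular sequences work for both operators. The trade-off is exactly the ``bookkeeping'' you flag: because $\beta$ perturbs the principal part ($1/f_\varepsilon^{2}$ versus $1/h_\varepsilon^{2}$ in front of $|\partial_s\psi|^{2}$), your sandwich argument needs either local $H^{2}$ elliptic regularity (available here since $f_\varepsilon$ is Lipschitz, as $\beta\in C^{0}$ with $\beta'\in L^\infty$) or a careful factorization so that the compact embedding can be applied on one side; the paper's singular-sequence characterization sidesteps this regularity issue entirely and is the more robust choice under the stated low regularity, while your second alternative (Persson's formula plus explicit Weyl sequences $u_{\varepsilon,1}^{0}(t)e^{i\xi s}\chi_n(s)$) is essentially a two-sided version of what the paper's appendix accomplishes in one lemma.
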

Theorem
\ref{essentialspectheo} is a consequence of Propositions \ref{proposition1} and \ref{proposition2} 
of  Section \ref{secessentspectrum}. 
In that same section,  Remark \ref{remarlasymplam} gives an asymptotic behavior for the sequence
$\{\lambda_{\varepsilon,1}(0)\}_\varepsilon$; in fact,
$\varepsilon^2 \lambda_{\varepsilon, 1}(0) \to (\pi/2)^2$, as $\varepsilon \to 0$.

The next results provide sufficient conditions to the existence of (discrete) eigenvalues for 
$-\Delta_\varepsilon$ below $\lambda_{\varepsilon, 1}(0)$.
Let $s_0 > 0$ be so that $\operatorname{supp} \beta \subset [-s_0, s_0]$.

\begin{Theorem}\label{intmaintheodisspec}
Assume the conditions (\ref{condthetainnorm}), (\ref{mainassuption}) and (\ref{poseigenpot}). If $\int_{-s_0}^{s_0} (|\Theta'(s)|^2 - \gamma^2) \ds < 0$, then there exists
$\varepsilon_1 > 0$ so that, for each $\varepsilon \in (0, \varepsilon_1)$,
\[\inf \sigma(-\Delta_\varepsilon) < \lambda_{\varepsilon, 1}(0),\]
i.e.,
$\sigma_{dis} (-\Delta_\varepsilon) \neq \emptyset$.
\end{Theorem}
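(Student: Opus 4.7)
\noindent\textbf{Proof proposal for Theorem \ref{intmaintheodisspec}.}

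The plan is a standard variational argument: since Theorem \ref{essentialspectheo} already tells us $\inf\sigma_{ess}(-\Delta_\varepsilon)=\lambda_{\varepsilon,1}(0)$, it is enough to exhibit a trial function $\psi$ in the form domain with
\[
a_\varepsilon(\psi) - \lambda_{\varepsilon,1}(0)\,\|\psi\|^2_{L^2(\Omega_\varepsilon)} < 0.
\]
By min-max, this forces the bottom of the spectrum to lie strictly below $\lambda_{\varepsilon,1}(0)$ and hence to be a discrete eigenvalue.

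First I would move everything to the straightened tube $\mathbb R\times(-1,1)$ using the immersion (\ref{immeintr}) and the unitary transform $\varphi\mapsto (\varepsilon h_\varepsilon(s,t))^{-1/2}\psi$, as set up in Section \ref{secaochange}. Under the assumption $k\cdot\Theta\equiv 0$, the induced metric is diagonal and a direct computation (identical to the one that produced the fibre potential (\ref{fiberp0not}) but with $\gamma$ replaced by $|\Theta'(s)|$) shows that the form $a_\varepsilon$ is unitarily equivalent to
\[
\tilde a_\varepsilon(\psi)=\int_{\mathbb R}\int_{-1}^{1}\left[\frac{|\partial_s\psi|^2}{h_\varepsilon(s,t)^2}+\frac{|\partial_t\psi|^2}{\varepsilon^2}+Y_\varepsilon(s,t)\,|\psi|^2\right]\ds\,\mathrm d t,
\]
where $h_\varepsilon(s,t)^2=1+|\Theta'(s)|^2\varepsilon^2 t^2$ and $Y_\varepsilon(s,t)$ has the same structure as $Y_\varepsilon^0(t)$ with $\gamma$ replaced by $|\Theta'(s)|=\gamma-\beta(s)$. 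In particular $Y_\varepsilon\equiv Y_\varepsilon^0$ outside $[-s_0,s_0]$, and for $(s,t)\in[-s_0,s_0]\times[-1,1]$ a Taylor expansion in $\varepsilon t$ gives
\[
Y_\varepsilon(s,t)-Y_\varepsilon^0(t)=\tfrac{1}{2}\bigl(|\Theta'(s)|^2-\gamma^2\bigr)+O(\varepsilon^2),
\]
uniformly in $t$, where the $O(\varepsilon^2)$ bound uses that $\beta,\beta'$ are bounded with compact support.

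Next I would pick the trial function
\[
\psi_{n,\ell}(s,t):=\varphi_{n,\ell}(s)\,u_{\varepsilon,1}^0(t),
\]
with $\varphi_{n,\ell}\in C_c^{0,1}(\mathbb R)$ equal to $1$ on $[-n,n]$ (where $n\ge s_0$), decaying linearly to $0$ on $[-n-\ell,-n]\cup[n,n+\ell]$. Using the eigenvalue equation for $u_{\varepsilon,1}^0$ in the $t$-variable to absorb the $t$-kinetic and the $Y_\varepsilon^0$-pieces, I obtain
\begin{equation*}
\tilde a_\varepsilon(\psi_{n,\ell})-\lambda_{\varepsilon,1}(0)\,\|\psi_{n,\ell}\|^2 \;=\; \underbrace{\int\!\!\int\frac{|\varphi_{n,\ell}'(s)|^2}{h_\varepsilon(s,t)^2}|u_{\varepsilon,1}^0(t)|^2\,\ds\,\mathrm d t}_{=:A(\varphi_{n,\ell})}\;+\;\underbrace{\int\!\!\int\varphi_{n,\ell}(s)^2\bigl[Y_\varepsilon(s,t)-Y_\varepsilon^0(t)\bigr]|u_{\varepsilon,1}^0(t)|^2\,\ds\,\mathrm d t}_{=:B(\varphi_{n,\ell})}.
\end{equation*}
Since $1/h_\varepsilon^2\le 1$ and $\|u_{\varepsilon,1}^0\|_{L^2(-1,1)}=1$, the kinetic term satisfies $A(\varphi_{n,\ell})\le\int_{\mathbb R}|\varphi_{n,\ell}'|^2\,\ds=2/\ell$. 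For the potential term, the expansion above and $\varphi_{n,\ell}\equiv 1$ on $[-s_0,s_0]$ give
\[
B(\varphi_{n,\ell})=\tfrac{1}{2}\int_{-s_0}^{s_0}\bigl(|\Theta'(s)|^2-\gamma^2\bigr)\ds+O(\varepsilon^2),
\]
and the leading term is strictly negative by hypothesis. Choosing $\ell$ large enough so that $2/\ell$ is smaller than half the absolute value of this negative integral, and then $\varepsilon_1$ small enough to control the $O(\varepsilon^2)$ remainder, yields $\tilde a_\varepsilon(\psi_{n,\ell})-\lambda_{\varepsilon,1}(0)\|\psi_{n,\ell}\|^2<0$ for every $\varepsilon\in(0,\varepsilon_1)$, which is exactly what is needed.

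The delicate point, and what I expect to be the main obstacle, is verifying that the change of coordinates in Section \ref{secaochange} really does produce the clean form written above, in particular that the cross terms arising from the $s$-dependence of $|\Theta'(s)|$ in $h_\varepsilon(s,t)$ (which generate contributions involving $\beta'(s)$ and $t\,\partial_s h_\varepsilon/h_\varepsilon^3$) can either be absorbed into the unitary rescaling or bounded by $O(\varepsilon)$ times the $s$-kinetic energy, so that they do not spoil the sign of $B$. Given that $\beta\in C^{0,1}$ has compact support, these extra contributions should be uniformly controlled on $[-s_0,s_0]\times[-1,1]$ and become negligible after choosing $\ell$ large and $\varepsilon$ small; this is the technical verification that has to be carried out carefully.
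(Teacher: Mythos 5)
Your proposal is correct and follows essentially the same route as the paper: a product trial function $\varphi(s)\,u_{\varepsilon,1}^0(t)$ with $\varphi\equiv 1$ on $[-s_0,s_0]$ and small tails (the paper uses exponential tails of rate $\delta$ rather than linear ones of length $\ell$), the transverse eigenvalue equation to cancel the $t$-part, and the uniform limit $V_\varepsilon-Y_\varepsilon^0\to(|\Theta'|^2-\gamma^2)/2$ giving a negative leading term beaten against the $O(1/\ell)$ (resp.\ $O(\delta)$) longitudinal kinetic cost. The cross terms you flag are handled exactly as you anticipate: since $\beta'\in L^\infty(\mathbb R)$ has compact support, $\partial_s f_\varepsilon=O(\varepsilon^2)$ uniformly (and for your trial function the first-order cross term even vanishes identically, as $\partial_s f_\varepsilon$ and $\varphi'$ have disjoint supports), so the paper simply retains these terms in $c_\varepsilon$ and observes that they tend to zero uniformly as $\varepsilon\to 0$.
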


The assumption $\int_{-s_0}^{s_0} (|\Theta'(s)|^2 - \gamma^2) \ds < 0$ in 
Theorem \ref{intmaintheodisspec}
is not a necessary condition to create discrete eigenvalues for $-\Delta_\varepsilon$. For example,

\begin{Theorem}\label{casetheodisspec}
Assume the conditions (\ref{condthetainnorm}), (\ref{mainassuption}) and (\ref{poseigenpot}). 
If $\int_{-s_0}^{s_0} (|\Theta'(s)|^2 - \gamma^2) \ds = 0$, then there exists
$\varepsilon_2 > 0$ so that, for each $\varepsilon \in (0, \varepsilon_2)$,
\[\inf \sigma(-\Delta_\varepsilon) < \lambda_{\varepsilon, 1}(0),\]
i.e.,
$\sigma_{dis} (-\Delta_\varepsilon) \neq \emptyset$.
\end{Theorem}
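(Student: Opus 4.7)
The strategy is variational: by Theorem \ref{essentialspectheo} and the min-max principle, it suffices to construct, for each sufficiently small $\varepsilon > 0$, a trial function $\varphi \in H_0^1(\Omega_\varepsilon)$ satisfying
\[
Q_\varepsilon(\varphi) := a_\varepsilon(\varphi) - \lambda_{\varepsilon,1}(0)\, \|\varphi\|_{L^2(\Omega_\varepsilon)}^2 < 0.
\]
After the change of variables of Section \ref{secaochange}, the natural step is to mimic the reduction used for Theorem \ref{intmaintheodisspec}: plug the separated ansatz $\varphi(s,t) = f(s)\, u_{\varepsilon,1}^0(t)$ (modulo the unitary Jacobian factor $1/\sqrt{\varepsilon h_\varepsilon}$) into $Q_\varepsilon$ and integrate out $t$ using $D_\varepsilon(0) u_{\varepsilon,1}^0 = \lambda_{\varepsilon,1}(0) u_{\varepsilon,1}^0$. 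This yields an effective one-dimensional expression
\[
Q_\varepsilon(\varphi) \;=\; \int_{\mathbb R} |f'(s)|^2\, ds \,+\, \int_{\mathbb R} W_\varepsilon(s)\, |f(s)|^2\, ds \,+\, R_\varepsilon(f),
\]
where $W_\varepsilon$ is compactly supported in $[-s_0,s_0]$ and, to leading order in $\varepsilon$, proportional to $|\Theta'(s)|^2 - \gamma^2$; under the present hypothesis, $\int_{\mathbb R} W_\varepsilon\, ds$ is therefore $o(1)$ as $\varepsilon \to 0$.

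The plateau cutoff $f = \chi_n$ used in Theorem \ref{intmaintheodisspec} now only produces a positive remainder of size $2/n$, because the potential integral has vanished. The new ingredient is to perturb the plateau by a small localized bump:
\[
f_{n,\delta}(s) := \chi_n(s) + \delta\, g(s),
\]
where $\chi_n$ equals $1$ on $[-n,n]$ and is linear on $[n,2n]$, and $g \in C_c^\infty(\mathbb R)$ is a nonnegative bump supported in an open set where $|\Theta'|^2 - \gamma^2 < 0$. Such a set exists: since $|\Theta'|^2 - \gamma^2 = \beta(\beta - 2\gamma)$ has zero mean (by hypothesis) and is not identically zero (the only nontrivial case being $\beta \not\equiv 0$), it is strictly negative on some open subset of $(-s_0,s_0)$. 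Hence $\int W_\varepsilon\, g\, ds < 0$ for $\varepsilon$ small.

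For $n$ large enough that $\chi_n \equiv 1$ on $\operatorname{supp} g \cup [-s_0,s_0]$, the cross term $\int \chi_n'\, g'\, ds$ vanishes (disjoint supports) and $\int W_\varepsilon \chi_n^2\, ds = \int W_\varepsilon\, ds$, giving
\[
\int \bigl(|f_{n,\delta}'|^2 + W_\varepsilon f_{n,\delta}^2\bigr)\, ds \;=\; \frac{2}{n} \,+\, \int W_\varepsilon\, ds \,+\, 2\delta \int W_\varepsilon\, g\, ds \,+\, \delta^2 \int \bigl(|g'|^2 + W_\varepsilon g^2\bigr) ds .
\]
First fix $\delta > 0$ small enough that the negative linear-in-$\delta$ contribution dominates the $O(\delta^2)$ term; then take $n$ large so that $2/n$ is also dominated; finally shrink $\varepsilon$ so that $\int W_\varepsilon\, ds$ together with $R_\varepsilon(f_{n,\delta} u_{\varepsilon,1}^0)$ is absorbed. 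This delivers $Q_\varepsilon(\varphi) < 0$, proving the claim.

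The main difficulty I expect is not this perturbative construction, which is standard once an effective one-dimensional form is available, but the uniformity in $\varepsilon$ of the reduction. One must control the $\varepsilon$-dependence of $u_{\varepsilon,1}^0$, of $h_\varepsilon$, and of $W_\varepsilon$, and the remainder $R_\varepsilon$, so that the two-step choice of $\delta$ and $n$ remains valid simultaneously for all $\varepsilon \in (0, \varepsilon_2)$. This bookkeeping is already carried out in the proof of Theorem \ref{intmaintheodisspec}, so the only genuinely new step is the replacement of the plateau $\chi_n$ by $\chi_n + \delta g$.
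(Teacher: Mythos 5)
Your proposal is correct and follows essentially the same route as the paper's proof: reduce to the form $c_\varepsilon$ via the separated ansatz $\phi(s)\,u_{\varepsilon,1}^0(t)$, integrate out $t$ using the eigenvalue equation, and perturb the plateau part of $\phi$ so that the first-order cross term with the effective potential $\tfrac12(|\Theta'|^2-\gamma^2)$ is strictly negative and dominates both the quadratic cost of the perturbation and the kinetic cost of the cutoff, with $\varepsilon$ taken small at the end. The only differences are cosmetic: the paper perturbs the plateau by $\eta(\gamma-|\Theta'(s)|)$ with exponential tails and the coupling $\eta=\sqrt{\delta}$, whereas you perturb by a bump supported where $|\Theta'|^2-\gamma^2<0$ (such a set exists, by continuity and the zero-mean hypothesis, precisely in the nontrivial case $\beta\not\equiv 0$, which both arguments implicitly require) and use linear cutoffs.
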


Theorems \ref{intmaintheodisspec} and \ref{casetheodisspec} show as an appropriated twisted effect can create
discrete eigenvalues for $-\Delta_\varepsilon$; its proofs are presented in Section \ref{sectiondiscspec}.







Now, 
the goal of the second part of this paper is to discuss a different situation where this phenomenon can also be obtained.

As already commented in the previous paragraphs, in \cite{davidruled, aleverri}, the authors
studied the Dirichlet Laplacian restricted in a two-dimensional twisted (straight) strip in $\mathbb R^3$.
We emphasize the paper \cite{davidruled} where the authors considered the region
\begin{equation}\label{regiondavidruled}
{\cal S}:=\{(s, t\cos \theta(s),  t\sin \theta(s)): (s,t) \in \mathbb{R}\times(a_1, a_2)\},
\end{equation}
where $\theta: \mathbb R \to \mathbb R$ is a (locally) Lipschitz continuous function and 
$a_2, a_2 \in \mathbb R$.
One of the results of that work shows that the 
assumptions
\begin{equation}\label{davidruledeq}
\lim_{|s|\to \infty} |\theta'(s)| = \infty, \quad a_1a_2 \leq 0,
\end{equation}
ensure that the discrete spectrum of the Dirichlet Laplacian in ${\cal S}$ is nonempty.
As noted by the authors, an interesting geometric fact is that the conditions in (\ref{davidruledeq}) show that the two-dimensional strip ${\cal S}$
looks at infinity like a three-dimensional tube of annular
cross-section $\{x \in \mathbb R^2: 0 < |x| < r\}$, 
where $r:=\max\{|a_1|, |a_2|\}$. 
Inspired by \cite{davidruled}, in \cite{aleverri} the author
considered a twisted strip similar to that in (\ref{regiondavidruled}). However, 
in that work,
the twisted effect ``grows'' at infinity while the width of the strip goes to zero. Then, since
the strip is thin enough, it was shown that the discrete spectrum of the Dirichlet Laplacian is nonempty and
was found an asymptotic behavior for the eigenvalues.
In the next paragraphs, we present an adaptation the results of \cite{aleverri} 
for the model of strips treated in this work.

In this new situation, for $n \geq 1$, assume that $\Theta : \mathbb R \to \mathbb R^n$ is a
$C^{1,1}$ function, which satisfies (\ref{condthetainnorm}), and

\vspace{0.2cm}
\noindent
(I) $\displaystyle \lim_{|s| \to \infty}|\Theta'(s)| = \infty$;

\vspace{0.2cm}
\noindent
(II)
$|\Theta'|$ is decreasing in $(-\infty, 0)$ and increasing in $(0, \infty)$.

\vspace{0.2cm}

Fix a number $0 < a < 1/3$.
For each $\varepsilon > 0$ small enough, let $\nu_1(\varepsilon) < 0$ and
$\nu_2(\varepsilon) > 0$ so that
\begin{equation}\label{conthetdernu12}
|\Theta'(\nu_i(\varepsilon))| = \frac{1}{\varepsilon^a}, \quad i \in \{1,2\}.
\end{equation}
Define $I_\varepsilon := (\nu_1(\varepsilon), \nu_2(\varepsilon))$ and let
$\Theta_\varepsilon : \mathbb R \to \mathbb R^{n}$ be a function of class 
$C^{1,1}$ so that

\vspace{0.2cm}
\noindent
(III) $\Theta_\varepsilon(s) = \Theta(s)$,  for all $s \in I_\varepsilon$;

\vspace{0.2cm}
\noindent
(IV) $|\Theta_\varepsilon'(s)| \leq  |\Theta'(s)|$, for all $s \in \mathbb R$;

\vspace{0.2cm}
\noindent
(V) $|\Theta_\varepsilon'(s)|$ is non-increasing in $(-\infty, 0)$ and  non-decreasing in $(0, +\infty)$.


\vspace{0.2cm}
\noindent
Now, assume that the sequence $\{\Theta_\varepsilon\}_\varepsilon$ satisfies 

\vspace{0.2cm}
\noindent
(VI) there exists $K> 0$ so that
\[|\Theta_\varepsilon'(s)| \leq \frac{K}{\varepsilon^a}, 
\quad |\Theta_\varepsilon''(s)| \leq \frac{K}{\varepsilon^b}, \quad 
|\Theta_\varepsilon'''(s)| \leq \frac{K}{\varepsilon^c}, \quad \forall s \in \mathbb R,\]
for all $\varepsilon > 0$ small enough, where $b,c$ are real numbers so that $b< 1,$ $a+c < 2$;

\vspace{0.2cm}
\noindent
(VII) $|\Theta_{\varepsilon}'(s)| \leq |\Theta_{\varepsilon'}'(s)|$, for all $s \in \mathbb R$,
if $\varepsilon > \varepsilon'$.

\vspace{0.2cm}
\noindent
Finally, 
we use the notation $\Theta_\varepsilon := (\Theta_1^\varepsilon, \cdots, \Theta_n^\varepsilon)$,
and
we assume that

\vspace{0.2cm}
\noindent
(VIII) $(\Theta_1^\varepsilon)^2 + \cdots + (\Theta_n^\varepsilon)^2 = 1$.

\vspace{0.2cm}

For each $\varepsilon > 0$ small enough, let $\Gamma_\varepsilon : \mathbb R \to \mathbb R^{n+1}$ 
be a curve of class $C^{1,1}$  whose curvature
$k_\varepsilon$ satisfies

\vspace{0.2cm}
\noindent
(IX) $\operatorname{supp} k_\varepsilon \subset I_\varepsilon$, 
and $(k_\varepsilon \cdot \Theta_\varepsilon)(s) =0$, for all $s \in \mathbb R$.

\vspace{0.2cm}
\noindent
The normal vector fields of $\Gamma_\varepsilon$ are denoted by
$N_1^\varepsilon, \cdots, N_n^\varepsilon$, and
$N_{\Theta_\varepsilon}^\varepsilon:= \Theta_1^\varepsilon N_1^\varepsilon + \cdots + \Theta_n^\varepsilon N_n^\varepsilon$.

Consider the strip
\[\widetilde{\Omega}_\varepsilon : = 
\{\Gamma_\varepsilon(s) + N_{\Theta_\varepsilon}^\varepsilon(s) \, \varepsilon \, t: (s,t) \in \mathbb R \times (-1, 1)\}.\]
Geometrically,  $\widetilde{\Omega}_\varepsilon$ is a locally twisting (and locally bending) strip for which the
twisted effect  ``grows'' at infinity while its width 
goes to zero.

Let $-\Delta_{\widetilde{\Omega}_\varepsilon}^D$ be the Dirichlet Laplacian operator in $\widetilde{\Omega}_\varepsilon$, i.e., 
the self-adjoint operator associated with the quadratic form
\begin{equation}\label{atildintsecd}
\tilde{a}_\varepsilon(\varphi) =\int_{\widetilde{\Omega}_\varepsilon} |\nabla \varphi|^2 \dx, \quad 
\dom \tilde{a}_\varepsilon = H_0^1(\widetilde{\Omega}_\varepsilon).
\end{equation}
For simplicity, write $-\tilde{\Delta}_\varepsilon := -\Delta_{\widetilde{\Omega}_\varepsilon}^D$.

\begin{Remark}
{\rm 
Let $T$ be a self-adjoint operator that
is bounded from below. We denote by  $\{\lambda_j(T)\}_{j \in \mathbb N}$ the non-decreasing sequence of numbers corresponding to the
spectral problem of $T$ according to the Min-Max Principle; see, for example, Theorem XIII.1 in \cite{reed}.}
\end{Remark}

Let $\Pi(\varepsilon)$ be the
infimum of the essential spectrum  of $-\tilde{\Delta}_\varepsilon $.
Denote by $N(\varepsilon) \leq \infty$ the number of eigenvalues $\lambda_j(-\tilde{\Delta}_\varepsilon )$
of $-\tilde{\Delta}_\varepsilon $ below $\Pi(\varepsilon)$. 
Let $-\Delta_\mathbb R$ be the one-dimensional Laplacian and 
consider the self-adjoint operator $-\Delta_\mathbb R + (|\Theta'(s)|^2/2) \Id$ acting in
$L^2(\mathbb R)$; $\Id$ denotes the identity operator in $L^2(\mathbb R)$.
Due to the condition (I), this operator
has purely discrete spectrum. 
In Section \ref{sectionthinstrips} of this work, we prove the following result.

\begin{Theorem}\label{theothinstripdiv}
Assume the conditions (I)-(IX).
For $\varepsilon > 0$ small enough, the discrete spectrum $\sigma_{dis}(-\tilde{\Delta}_\varepsilon )$ is nonempty
and $N(\varepsilon) \to \infty$, as $\varepsilon \to 0$. Furthermore, for each $j \in \mathbb N$,
\[\lim_{\varepsilon \to 0} \left[\lambda_j(-\tilde{\Delta}_\varepsilon) - \left(\frac{\pi}{2 \varepsilon}\right)^2 \right] =
\lambda_j\left(-\Delta_\mathbb R  + \frac{|\Theta'(s)|^2}{2} \Id \right).\]
\end{Theorem}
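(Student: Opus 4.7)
The plan adapts the thin-strip limit strategy of \cite{aleverri}. First, straighten $\widetilde\Omega_\varepsilon$ via $(s,t)\mapsto \Gamma_\varepsilon(s)+\varepsilon t\,N_{\Theta_\varepsilon}^\varepsilon(s)$. Using the frame equations $(N_j^\varepsilon)'=-k_j^\varepsilon T_\varepsilon$ and condition (IX), the vector $(N_{\Theta_\varepsilon}^\varepsilon)' = \sum_j (\Theta_j^\varepsilon)' N_j^\varepsilon$ is orthogonal to both $T_\varepsilon$ and $N_{\Theta_\varepsilon}^\varepsilon$ and has norm $|\Theta_\varepsilon'|$, so the induced metric is diagonal with $g_{ss}=h_\varepsilon^2(s,t):=1+|\Theta_\varepsilon'(s)|^2\varepsilon^2 t^2$, $g_{tt}=\varepsilon^2$, and volume element $\varepsilon h_\varepsilon\,ds\,dt$. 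Conjugating by $\sqrt{\varepsilon h_\varepsilon}$ yields a unitarily equivalent operator on $L^2(\mathbb R\times(-1,1))$ of the form
\begin{equation*}
H_\varepsilon\psi = -\frac{1}{\varepsilon^2}\partial_t^2\psi - \partial_s\!\left(\frac{1}{h_\varepsilon^2}\partial_s\psi\right) + V_\varepsilon(s,t)\psi,
\end{equation*}
where the leading part of $V_\varepsilon$ is $|\Theta_\varepsilon'(s)|^2/(2h_\varepsilon^2)$ (analogous to (\ref{fiberp0not}) with $\gamma$ replaced by $|\Theta_\varepsilon'(s)|$) and the remaining corrections involve $\Theta_\varepsilon''$ and $\Theta_\varepsilon'''$. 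Since $a<1/3$, the bounds in (VI) give $|\Theta_\varepsilon'|^2\varepsilon^2\leq K^2\varepsilon^{2-2a}\to 0$ uniformly, so $h_\varepsilon\to 1$ and, using also $b<1$ and $a+c<2$, all correction terms are $o(1)$ in $L^\infty$. The transverse operator $-\varepsilon^{-2}\partial_t^2$ on $H_0^1(-1,1)$ has ground state $\chi_1(t)=\cos(\pi t/2)$ with eigenvalue $(\pi/2\varepsilon)^2$ and spectral gap of order $\varepsilon^{-2}$, so projecting onto $\chi_1$ extracts the effective one-dimensional operator $-\partial_s^2+|\Theta_\varepsilon'|^2/2$, which by (III), (VII) and (I) converges monotonically to $-\Delta_\mathbb R+|\Theta'(s)|^2/2$.

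I would then establish the eigenvalue asymptotics by Min-Max in two steps. For the upper bound, fix $j$, take the first $j$ orthonormal eigenfunctions $\{\varphi_k\}_{k=1}^j$ of the limit operator (which decay rapidly since $|\Theta'|^2\to\infty$), and use the trial subspace $\mathrm{span}\{\varphi_k(s)\chi_1(t)\}$; since $\partial_s\chi_1=0$ and $\int_{-1}^1\chi_1\partial_t\chi_1\,dt=0$, direct calculation gives $\langle H_\varepsilon(\varphi_k\chi_1),\varphi_\ell\chi_1\rangle = (\pi/2\varepsilon)^2\delta_{k\ell}+\langle(-\partial_s^2+|\Theta_\varepsilon'|^2/2)\varphi_k,\varphi_\ell\rangle_{L^2(\mathbb R)}+o(1)$, whose $j$-th eigenvalue tends to $(\pi/2\varepsilon)^2+\lambda_j(-\Delta_\mathbb R+|\Theta'|^2/2)$. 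For the lower bound, decompose any $\psi\in H_0^1(\mathbb R\times(-1,1))$ as $\psi=\phi(s)\chi_1(t)+\psi^\perp(s,t)$ with $\psi^\perp\perp\chi_1$ in $L^2_t$; then $\langle H_\varepsilon\psi^\perp,\psi^\perp\rangle\geq((\pi/\varepsilon)^2-C)\|\psi^\perp\|^2$ exceeds $(\pi/2\varepsilon)^2+M$ for any fixed $M$ once $\varepsilon$ is small, so the $j$-th Min-Max value is effectively attained within the $\chi_1$-section and $\lambda_j(H_\varepsilon)-(\pi/2\varepsilon)^2\geq\lambda_j(-\partial_s^2+|\Theta_\varepsilon'|^2/2)-o(1)$; monotone convergence under (VII) then matches the upper bound.

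The main obstacle will be the region $\mathbb R\setminus I_\varepsilon$, where $\Theta_\varepsilon$ is constructed by hand and $|\Theta_\varepsilon'|$ is only controlled to lie between $\varepsilon^{-a}$ and $K\varepsilon^{-a}$. Fortunately, the effective potential is $\geq\varepsilon^{-2a}/2\to\infty$ there, so Agmon-type estimates force both the low eigenfunctions of $H_\varepsilon$ and the limit eigenfunctions $\varphi_k$ to be exponentially concentrated in $I_\varepsilon$, absorbing the discrepancy between $\Theta_\varepsilon$ and $\Theta$ outside. The existence of discrete spectrum and $N(\varepsilon)\to\infty$ then follow from the same analysis: a Persson-type argument based on the divergence of the effective potential at $|s|=\infty$ gives $\Pi(\varepsilon)-(\pi/2\varepsilon)^2\to\infty$, while each $\lambda_j(-\tilde\Delta_\varepsilon)-(\pi/2\varepsilon)^2$ remains bounded, placing the first $j$ eigenvalues strictly below $\Pi(\varepsilon)$ once $\varepsilon$ is small enough.
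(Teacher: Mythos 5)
Your overall strategy is sound, and your upper bound is essentially the paper's: the authors also take trial functions $w(s)\chi_1(t)\tilde f_\varepsilon^{-1/2}$, reduce to the one-dimensional form $m_\varepsilon$, and control the error through the bounds in (VI), arriving at (\ref{upperinvsev}) (note that by (IV) the comparison $\lambda_j(-\Delta_\mathbb R+|\Theta_\varepsilon'|^2/2)\leq\lambda_j(-\Delta_\mathbb R+|\Theta'|^2/2)$ is immediate, so no decay of the limit eigenfunctions is needed on this side). Where you genuinely diverge is in the lower bound and in the identification of the limit. The paper does not decompose $\psi=\phi\chi_1+\psi^\perp$; it bounds the transverse energy at each fixed $s$ from below by $\Sigma(\varepsilon^2|\Theta_\varepsilon'(s)|^2)/\varepsilon^2$, the first eigenvalue of the auxiliary weighted operator $S_\varepsilon$ computed by analytic perturbation theory, which yields (\ref{ineqpertanaly}) and the purely longitudinal form $n_\varepsilon$. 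Your projection-plus-spectral-gap argument instead retains part of the transverse kinetic energy to penalize $\psi^\perp$, and this is in fact the more robust route at the step corresponding to (\ref{inv7lowerb}): since $n_\varepsilon$ contains no $t$-derivatives, passing from the two-dimensional min-max values of $N_\varepsilon$ to the one-dimensional $\lambda_j$ for $j\geq 2$ requires precisely the transverse-mode separation you make explicit; do spell out the cross terms (they are controlled by $\|\tilde f_\varepsilon^{-2}-1\|_{L^\infty(\Lambda)}=O(\varepsilon^{2-2a})$, by $\|V_\varepsilon-|\Theta_\varepsilon'|^2/2\|_{L^\infty}=o(1)$, and by the $O(\varepsilon^{-2})$ gap). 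For the convergence $\lambda_j(-\Delta_\mathbb R+|\Theta_\varepsilon'|^2/2)\to\lambda_j(-\Delta_\mathbb R+|\Theta'|^2/2)$, the paper proves norm resolvent convergence (Proposition \ref{proprooftheoint02}) via Proposition 5.3 of \cite{solomyak}, using (\ref{conthetdernu12}), (V) and (VII); your alternative (monotonicity from (VII) plus Agmon concentration in $I_\varepsilon$) also works, but be aware that monotonicity alone only gives $\lim_{\varepsilon\to 0}\lambda_j(Y_\varepsilon)\leq\lambda_j(Y)$; the reverse inequality needs the IMS/cutoff argument with eigenfunctions of $Y_\varepsilon$ truncated inside $I_\varepsilon$, where $\Theta_\varepsilon=\Theta$, so that the possibly very rapid growth of $|\Theta'|$ outside $I_\varepsilon$ never enters the estimates. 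Finally, your Persson-type argument that $\Pi(\varepsilon)-(\pi/2\varepsilon)^2\to\infty$ while each $\lambda_j(-\tilde\Delta_\varepsilon)-(\pi/2\varepsilon)^2$ stays bounded makes explicit the step the paper leaves implicit when concluding $\sigma_{dis}(-\tilde\Delta_\varepsilon)\neq\emptyset$ and $N(\varepsilon)\to\infty$; that addition is worth keeping.
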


Estimating the number of discrete eigenvalues of the Dirichlet Laplacian operator in unbounded strips is also an interesting problem \cite{daugela, daugeray, daugebonafos, aleverri}. 
In the case of curved strips in $\mathbb R^2$, 
in  \cite{duclos}, 
the authors found that this number is
finite and bounded for a constant that does not depend on the width of the strip.
Now, consider the ``broken'' strip defined by 
$\{(s,t) \in \mathbb R^2: s \tan \theta < |t| < (s+\pi / \sin \theta) \tan \theta\}$, $\theta \in (0, \pi/2)$.
In \cite{daugela} , the authors ensured the existence of discrete eigenvalues
for the Dirichlet Laplacian operator in that region. They proved that
there are a finite number of them and this number tends to infinity as $\theta \to 0$.

As a consequence of  Theorem \ref{theothinstripdiv},
the number of discrete eigenvalues of
$-\tilde{\Delta}_\varepsilon$ grows when the width of the strip goes to zero. 
In \cite{aleverri}, a result similar to Theorem \ref{theothinstripdiv} was obtained as a consequence
of a convergence in the norm resolvent sense of the operators associated to the problem. 
In this text, we present a
simpler proof where the strategy is based on to find upper and lower bounds for the eigenvalues 
$\lambda_j(-\tilde{\Delta}_\varepsilon)$.


We finish this introduction with some remarks and examples of the model presented.

\begin{Remark}{\rm
Theorem \ref{theothinstripdiv} shows that the locally bending effect imposed by 
(IX) does not affect the final
result.}
\end{Remark}

\begin{Example}{\rm
A simple example of a family of curves $\{\Gamma_\varepsilon\}_\varepsilon$ satisfying (IX) is the following.
Let $\Theta$ be a function satisfying (I), and
let $\Gamma : \mathbb R \to \mathbb R^{n+1}$ be a curve of class 
$C^{2,1}$ whose curvature $k$ has compact support and satisfies $(k \cdot \Theta)(s)=0$, for all $s \in \mathbb R$. 
Define $\Gamma_\varepsilon := \Gamma$, for all $\varepsilon > 0$ small enough.
Then, the condition 
(IX) is satisfied for all $\varepsilon > 0$ small enough.}
\end{Example}

\begin{Remark}{\rm
In the case $n=2$, conditions 
(\ref{condthetainnorm}) and (I) imply that $\Theta$ can be written as 
$\Theta = (\cos(\psi), \sin(\psi))$, for some function $\psi \in C^{1,1}(\mathbb R;\mathbb R)$ so that
$|\psi'(s)| \to \infty$, as $|s|\to\infty$.
In the case $n=3$, due to the condition (\ref{condthetainnorm}),
$\Theta$ 
can be written as 
$\Theta = (\cos(\phi)\cos(\psi), \sin(\phi)\cos(\psi), \sin(\psi))$, for functions $\phi, \psi \in C^{1,1}(\mathbb R; \mathbb R)$. 
Since $|\Theta'| = \sqrt{(\phi')^2  \cos^2(\psi) + (\psi')^2}$, the condition  (I) is  satisfied, 
for example, if
$|\psi'(s)| \to \infty$, as $|s|\to\infty$,
or, if $\psi(s) \in (c_1, c_2] \subset (0, \pi/2)$, for all $s\in \mathbb R$, and 	$|\phi'(s)| \to \infty$, as $|s|\to\infty$.
	}
\end{Remark}

\begin{Example}\label{examplen2}
{\rm
Consider  $\Theta: \mathbb R \to \mathbb R^2$ defined by $\Theta(s)= (\cos(s^2), \sin(s^2))$.
Some calculations show that $|\Theta'(s)|= 2|s|$, $s \in \mathbb R$; $\Theta$ satisfies (I)-(II).
Fix a number $0 < a < 1/3$. 
For each $\varepsilon > 0$ small enough,
define the functions  $\alpha_\varepsilon: \mathbb R \longrightarrow \mathbb R$,
\[\alpha_\varepsilon(s) := \left\{\begin{array}{ll}
(-2\varepsilon^a s -1)/\varepsilon^{2a},  &   s \leq -1/\varepsilon^a,\\
s^2, &    s\in (-1/\varepsilon^a, 1/\varepsilon^a),\\
(2\varepsilon^a s -1)/\varepsilon^{2a},  &  s\geq 1/\varepsilon^a,
\end{array}\right.\]
and
$\Theta_\varepsilon(s) :=  (\cos(\alpha_\varepsilon(s)), \sin(\alpha_\varepsilon(s)))$, $s\in \mathbb R$.
Taking $b=2a$ and $c=3a$, the conditions in (VI) are satisfied.
In fact, the sequence $\{\Theta_\varepsilon\}_\varepsilon$ satisfies the conditions (III)-(VIII).
	}
\end{Example}

\begin{Example}{\rm
Consider the function  ${\Theta}: \mathbb R \to \mathbb R^3$ defined by
$$\Theta(s) = \left(\cos(s^2)\cos\left(\frac{1}{1+s^2}\right), \sin(s^2)\cos\left(\frac{1}{1+s^2}\right), \sin\left(\frac{1}{1+s^2}\right)\right).$$
Note that the condition 
(\ref{condthetainnorm}) is satisfied and 
the function
$$|\Theta'(s)|= \sqrt{4s^2 \cos^2\left(\frac{1}{1+s^2}\right) + \frac{4s^2}{(1+s^2)^4}}, \quad s \in \mathbb R,$$
satisfies (I)-(II).
Fix a number $0 < a < 1/3$.
For each $\varepsilon>0$ small enough, 
take $\nu(\varepsilon)> 0$ so that
$|\Theta'(\pm \nu(\varepsilon))|
=1/\varepsilon^a$; then, there exists $K>0$ so that $|\nu(\varepsilon)| \leq K /\varepsilon^a$, for all
$\varepsilon > 0$ small enough.
Let $\alpha_\varepsilon: \mathbb R \longrightarrow \mathbb R$ be the function defined by 
\[\alpha_\varepsilon(s) := \left\{\begin{array}{ll}
	-2\nu(\varepsilon)s -\nu(\varepsilon)^2,  &   s \leq - \nu(\varepsilon),\\
	s^2, &    s\in (-\nu(\varepsilon), \nu(\varepsilon)),\\
	2\nu(\varepsilon) s -\nu(\varepsilon)^2,  &  s\geq \nu(\varepsilon);
\end{array}\right.\]
note that $\alpha_\varepsilon \in C^{1,1}(\mathbb R; \mathbb R)$.
Now, define 
\[\Theta_\varepsilon(s) := \left(\cos(\alpha_\varepsilon(s))\cos\left(\frac{1}{1+s^2}\right), \sin(\alpha_\varepsilon(s))\cos\left(\frac{1}{1+s^2}\right), \sin\left(\frac{1}{1+s^2}\right)\right),\quad s \in \mathbb R.\]
The sequence $\{\Theta_\varepsilon\}_\varepsilon$ satisfies  the conditions (III)-(VII), where $b=2a$ and $c=3a$.
}\end{Example}

This paper is organized as follows. In Section \ref{secaochange} we present some details of the construction 
of the region in (\ref{maindomain}) and we make usual change of coordinates 
in the quadratic form in (\ref{qfiqldef}).
Sections \ref{secessentspectrum} and \ref{sectiondiscspec} are dedicated to study the essential and discrete
spectrum of $-\Delta_\varepsilon$, respectively.
In Section \ref{sectionthinstrips}, we study the spectral problem of $-\tilde{\Delta}_\varepsilon$.
In Appendix \ref{appendix001} are presented results that are useful in this text.
Along the text, $K$ is used to denote different constants.

\section{Geometry of the region and change of coordinates}\label{secaochange}

Recall the region $\Omega_\varepsilon$ given by (\ref{maindomain}) in the Introduction 
and the straight strip $\Lambda := \mathbb R \times (-1,1)$.
In this section we identify  $\Omega_\varepsilon$ with the 
Riemannian manifold $(\Lambda, {\cal G}_\varepsilon)$, where ${\cal G}_\varepsilon$ is given by (\ref{geodgaussdef}),
below. After that, we perform usual changes of coordinates in the quadratic form $a_\varepsilon(\varphi)$.

Consider the map
\begin{equation}\label{immeintr}
\begin{array}{rcll}
{\cal L}_\varepsilon: & \mathbb R^2 & \longrightarrow        &  \mathbb R^{n+1}, \\
& (s,t)       & \longmapsto    & \Gamma(s) +  N_\Theta(s) \varepsilon t .
\end{array}
\end{equation}
We have $\Omega_\varepsilon = \mathcal{L}_\varepsilon(\Lambda)$. 
Define the metric ${\cal G}_\varepsilon:= \nabla {\cal L}_\varepsilon \cdot (\nabla {\cal L}_\varepsilon)^\perp$.
Some calculations show that 
\begin{equation}\label{geodgaussdef}
{\cal G}_\varepsilon=\left(\begin{array}{cc}
f_\varepsilon^2  & 0 \\
0  & \varepsilon^2
\end{array}\right), \quad
f_\varepsilon(s,t):= \sqrt{1 +  |\Theta'(s)|^2 \varepsilon^2 t^2 }.
\end{equation}
Let ${\cal J}_\varepsilon$ be the Jacobian matrix of ${\cal L}_\varepsilon$. One has
$\det {\cal J}_\varepsilon = | \det {\cal G}_\varepsilon|^{1/2} = \varepsilon f_\varepsilon > 0$, for all $(s, t) \in \Lambda$.
If $\Gamma$ and $\Theta$ are smooth functions, the
map ${\cal L}_\varepsilon: \Lambda \to \Omega_\varepsilon$ is a local smooth diffeomorphism. 
Then, $\Omega_\varepsilon$ can be identified with the Riemannian manifold $(\Lambda, {\cal G}_\varepsilon)$.
However, as mentioned in the Introduction of this work, the assumptions about $\Gamma$ and $\Theta$ are more general.
At first, one has


\begin{Proposition}\label{propdiff}
Assume that $\Gamma \in C^{1,1}(\mathbb R; \mathbb R^{n+1})$ and $\Theta \in C^{0,1}(\mathbb R; \mathbb R^{n})$.
Then, the map  ${\cal L}_\varepsilon: \Lambda \longrightarrow \Omega_\varepsilon$ is 
a	local $C^{0,1}$-diffeomorphism.
\end{Proposition}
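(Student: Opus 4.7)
The plan is to verify regularity of $\mathcal{L}_\varepsilon$, compute its almost-everywhere Jacobian and show it is uniformly invertible, and then invoke a Lipschitz inverse function theorem. First I would check that $\mathcal{L}_\varepsilon \in C^{0,1}_{\rm loc}(\Lambda; \mathbb{R}^{n+1})$. Since $\Gamma \in C^{1,1}$, the tangent $T = \Gamma'$ is locally Lipschitz. The relatively parallel frame equations give $N_j' = -k_j T$ almost everywhere, with each $k_j$ locally bounded (because $k_1^2 + \cdots + k_n^2 = k^2 = |T'|^2 \in L^\infty_{\rm loc}$). Hence each $N_j$ is locally Lipschitz, and combining this with $\Theta \in C^{0,1}$, the field $N_\Theta = \Theta_1 N_1 + \cdots + \Theta_n N_n$ is locally Lipschitz; so is $\mathcal{L}_\varepsilon(s,t) = \Gamma(s) + \varepsilon t\, N_\Theta(s)$.

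Next, by Rademacher's theorem the Jacobian $\mathcal{J}_\varepsilon$ exists almost everywhere, with
\[
\partial_t \mathcal{L}_\varepsilon = \varepsilon N_\Theta, \qquad \partial_s \mathcal{L}_\varepsilon = (1 - \varepsilon t\,(k\cdot\Theta))\, T + \varepsilon t \sum_{j=1}^n \Theta_j' N_j,
\]
where the second identity follows from $N_\Theta' = \sum_j \Theta_j' N_j - (k\cdot\Theta)\, T$. Under the standing purely-twisted assumption $k \cdot \Theta = 0$, together with $\sum_j \Theta_j \Theta_j' = 0$ coming from $|\Theta|^2 = 1$, the orthonormality of the frame $\{T, N_1, \dots, N_n\}$ immediately yields the Gram matrix $\mathcal{G}_\varepsilon$ displayed in (\ref{geodgaussdef}); in particular $\det \mathcal{J}_\varepsilon = \varepsilon f_\varepsilon \geq \varepsilon > 0$ almost everywhere on $\Lambda$.

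To conclude, I would apply a Lipschitz inverse function theorem (Clarke's, for instance). Clarke's generalized Jacobian of $\mathcal{L}_\varepsilon$ at a point $(s_0, t_0)$ is the convex hull of limits of $\mathcal{J}_\varepsilon$ at nearby differentiability points, and each such matrix has determinant at least $\varepsilon$ in absolute value since the bound on $\det \mathcal{J}_\varepsilon$ is uniform on compact subsets of $\Lambda$. Hence the generalized Jacobian consists of nonsingular matrices with uniformly bounded inverses in a neighborhood of $(s_0, t_0)$, and Clarke's theorem yields that $\mathcal{L}_\varepsilon$ is bi-Lipschitz on such a neighborhood, which is exactly the local $C^{0,1}$-diffeomorphism property. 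The main technical obstacle is precisely propagating the pointwise (almost-everywhere) bound on $\det \mathcal{J}_\varepsilon$ to a uniform lower bound on the entire generalized Jacobian; the cancellations $k\cdot\Theta = 0$ and $\sum_j \Theta_j \Theta_j' = 0$ are what make $\mathcal{G}_\varepsilon$ diagonal and give the clean unconditional lower bound $\varepsilon f_\varepsilon$, rather than one depending on the sign of $1 - \varepsilon t\,(k \cdot \Theta)$.
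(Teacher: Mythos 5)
Your preliminary analysis is correct and is the natural one: local Lipschitzness of $T$, $N_j$, and $N_\Theta$, the a.e.\ formulas for $\partial_s\mathcal{L}_\varepsilon$ and $\partial_t\mathcal{L}_\varepsilon$, and the computation of ${\cal G}_\varepsilon$ using $k\cdot\Theta=0$ and $\Theta\cdot\Theta'=0$ all check out. Bear in mind, though, that the paper itself offers no proof to compare against: it quotes the result from \cite{davidmainpaper} (where it is established under $k\cdot\Theta\in L^\infty(\mathbb R)$ and $\varepsilon\|k\cdot\Theta\|_{L^\infty(\mathbb R)}<1$ rather than $k\cdot\Theta=0$), so your argument must stand on its own — and as written it has two genuine gaps in the final step.

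First, Clarke's inverse function theorem is a statement about Lipschitz maps between spaces of \emph{equal} dimension, while $\mathcal{L}_\varepsilon$ maps $\Lambda\subset\mathbb R^2$ into $\mathbb R^{n+1}$; for $n\geq 2$ (the only nontrivial case, since (\ref{condthetainnorm}) with $n=1$ forces $\Theta_1\equiv\pm1$ and hence $\Theta'=0$) the matrix ${\cal J}_\varepsilon$ is $(n+1)\times 2$ and has no determinant — the identity ``$\det{\cal J}_\varepsilon=\varepsilon f_\varepsilon$'' is really the square root of the Gram determinant. You need an extra reduction, e.g.\ compose with the orthogonal projection $P$ onto ${\rm span}\{T(s_0),N_\Theta(s_0)\}$, apply Clarke to $P\circ\mathcal{L}_\varepsilon:\mathbb R^2\to\mathbb R^2$, and then use $|\mathcal{L}_\varepsilon(x)-\mathcal{L}_\varepsilon(y)|\geq|P\mathcal{L}_\varepsilon(x)-P\mathcal{L}_\varepsilon(y)|$ to get the local bi-Lipschitz property of $\mathcal{L}_\varepsilon$ onto its image. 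Second, and more seriously, nonsingularity of the \emph{whole} generalized Jacobian does not follow from an a.e.\ lower bound on the determinant: the generalized Jacobian is a convex hull, and determinants are not stable under convex combination (for $x\mapsto|x|$ one has $|f'|=1$ a.e.\ yet $0\in\partial f(0)$, and the map is not locally injective). You identify this as ``the main technical obstacle'' but then assert it rather than prove it. It is true here, but for a structural reason you must spell out: since $T$, $N_j$, $\Theta$ are continuous while only $\Theta'$ is merely $L^\infty_{\rm loc}$, every limit of Jacobians at differentiability points near $(s_0,t_0)$ has $t$-column exactly $\varepsilon N_\Theta(s_0)$ and $s$-column of the form $T(s_0)+\varepsilon t_0 v$ with $v\in{\rm span}\{N_1(s_0),\dots,N_n(s_0)\}$; this class is convex, and for any element the Gram determinant is $\varepsilon^2\bigl(1+\varepsilon^2t_0^2|v|^2\bigr)-\varepsilon^4t_0^2(v\cdot\Theta$-component$)^2\geq\varepsilon^2$ by Cauchy--Schwarz. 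With these two repairs (projection to equal dimensions, and the convexity/uniform Gram bound argument in place of the bare a.e.\ determinant bound) your Clarke-based route does yield the local $C^{0,1}$-diffeomorphism.
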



The proof of this result can be found in \cite{davidmainpaper}.
We  emphasize that in that work does not necessarily $k \cdot \Theta = 0$, and
Proposition \ref{propdiff} is proven under the additional assumptions that
$k \cdot \Theta \in L^\infty(\mathbb R)$ and $\varepsilon \| k \cdot \Theta \|_{L^\infty(\mathbb R)} < 1$.

In particular,  Proposition \ref{propdiff} ensures that ${\cal L}_\varepsilon$ is a $C^{0,1}$-immersion.
In addition, assume that ${\cal L}_\varepsilon$ is injective. Thus, 
the strip $\Omega_\varepsilon$  does not self-intersect and it is interpreted as an immersed submanifold
in $\mathbb R^{n+1}$.
As a consequence, $(\Lambda, {\cal G}_\varepsilon)$ is an abstract Riemannian manifold.

Now, we perform 
a change of coordinates 
so that the quadratic form $a_\varepsilon(\varphi)$ starts to act in the 
Hilbert space $L^2(\Lambda)$ (with the usual metric of $\mathbb R^2$) instead of $L^2(\Omega_\varepsilon)$.
At first, consider the unitary operator
\[\begin{array}{rccc}
{\cal U}_\varepsilon: & L^2(\Omega_\varepsilon) & \longrightarrow        &  L^2( \Lambda, f_\varepsilon \ds\dt), \\
& \psi       & \longmapsto    & \varepsilon ^{1/2} \psi \circ {\cal L}_\varepsilon,
\end{array}\]
and define the quadratic form
\begin{align*}
b_\varepsilon (\psi)  
:= & \,
a_\varepsilon \left({\cal U}_\varepsilon^{-1} \psi \right) 
= \int_\Lambda \langle \nabla \psi , {\cal G}_\varepsilon^{-1} \nabla \psi \rangle f_\varepsilon \ds \dt \\	               
= & 
\int_\Lambda \frac{|\partial_s \psi |^2}{ f_\varepsilon } \ds \dt 
+  \frac{1}{\varepsilon^2}\int_\Lambda | \partial_t \psi  |^2 f_\varepsilon \ds\dt,
\end{align*}
$\dom b_\varepsilon := {\cal U}_\varepsilon (H^1_0 (\Omega_\varepsilon))$;
$\partial_t =\partial/\partial_t$ and $\partial_s =\partial/\partial_s$.
Then, consider 
\[\begin{array}{rccc}
{\cal V}_\varepsilon: & L^2(\Lambda) & \longrightarrow        &  L^2( \Lambda, f_\varepsilon \ds \dt), \\
& \psi       & \longmapsto    & f_\varepsilon^{-1/2} \psi,
\end{array}\]
which is also a unitary operator,
and, finally, define
\begin{equation*}\label{formab} 
c_\varepsilon(\psi)  := b_\varepsilon \left({\cal V}_\varepsilon \psi \right) =
\int_\Lambda \frac{1}{ f_\varepsilon^2 } \left|\partial_s \psi - 
\frac{ \partial_s f_\varepsilon}{2f_\varepsilon} \psi\right|^2\ds \dt   	+ 
\frac{1}{\varepsilon^2} \int_\Lambda |\partial_t \psi |^2 \ds\dt  +
\int_\Lambda V_\varepsilon |\psi|^2 \ds\dt,
\end{equation*}
where
\begin{equation*}\label{Vbeta}
V_\varepsilon(s,t) := 
-\frac{3|\Theta'(s)|^4 \varepsilon^2 t^2 }{4f_\varepsilon^4(s,t)}  
+\frac{|\Theta'(s)|^2}{2f_\varepsilon^2(s,t)},
\end{equation*}
$\dom c_\varepsilon = {\cal V}_\varepsilon^{-1}({\cal U}_\varepsilon (H^1_0 (\Omega_\varepsilon)))$.
Due to the conditions in (\ref{mainassuption}),
one has  $\dom c_\varepsilon = H_0^1(\Lambda)$.
Denote by $C_\varepsilon$ the self-adjoint operator associated with the
quadratic form $c_\varepsilon(\psi).$

\section{Essential spectrum}\label{secessentspectrum}

This section is dedicated to prove Theorem \ref{essentialspectheo}.
Recall the functions  $h_\varepsilon$ and $Y_\varepsilon^0$ defined by
(\ref{fiberp0not}) in the Introduction.
Consider the quadratic form
\[d_\varepsilon(\psi) := \int_\Lambda \frac{|\partial_s \psi|^2}{h_\varepsilon^2 }\ds \dt 	+
\frac{1}{\varepsilon^2} \int_\Lambda |\partial_t \psi |^2 \ds\dt  	+
\int_\Lambda Y_\varepsilon^0 |\psi|^2 \ds\dt, 	\quad  	\dom d_\varepsilon := H^1_0(\Lambda).\]
Denote by $D_\varepsilon$ the self-adjoint operator associated with $d_\varepsilon(\psi)$.
We start with the following result.

\begin{Proposition}\label{proposition1}
Assume the conditions in (\ref{mainassuption}).
Then,	$\sigma_{ess}(C_\varepsilon) = \sigma_{ess}(D_\varepsilon).$
\end{Proposition}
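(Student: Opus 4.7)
The key observation is that, since $\operatorname{supp}\beta\subset[-s_0,s_0]$, one has $|\Theta'(s)|=\gamma$ for every $|s|>s_0$. A direct substitution into the definitions then gives $f_\varepsilon(s,t)=h_\varepsilon(t)$, $\partial_s f_\varepsilon\equiv 0$, and $V_\varepsilon(s,t)=Y_\varepsilon^0(t)$ throughout $(\mathbb{R}\setminus[-s_0,s_0])\times(-1,1)$. Therefore the integrands defining $c_\varepsilon(\psi)$ and $d_\varepsilon(\psi)$ agree pointwise outside the compact cylinder $K:=[-s_0,s_0]\times(-1,1)$, so $c_\varepsilon(\psi)=d_\varepsilon(\psi)$ whenever $\operatorname{supp}\psi\cap K=\emptyset$. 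In particular, on such functions $C_\varepsilon$ and $D_\varepsilon$ coincide as differential expressions.

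The plan is to deduce the equality of essential spectra via Weyl's singular sequence criterion and a cutoff argument supported away from $K$. To show $\sigma_{ess}(D_\varepsilon)\subset\sigma_{ess}(C_\varepsilon)$, I would start from a singular sequence $\{\psi_n\}\subset\dom D_\varepsilon$ with $\|\psi_n\|_{L^2}=1$, $\psi_n\rightharpoonup 0$, and $\|(D_\varepsilon-\lambda)\psi_n\|_{L^2}\to 0$. Coercivity of $d_\varepsilon$ (modulo the bounded potential $Y_\varepsilon^0$) implies that $\{\psi_n\}$ is bounded in $H^1_0(\Lambda)$. Fix a cutoff $\chi\in C^\infty(\mathbb{R};[0,1])$ with $\chi\equiv 0$ on $[-1,1]$ and $\chi\equiv 1$ outside $[-2,2]$, and set $\chi_R(s):=\chi(s/R)$. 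For each $n$ I would diagonally choose $R_n\to\infty$ (with $R_n>s_0$) and an index $m_n$ such that (i) $\|(D_\varepsilon-\lambda)\psi_{m_n}\|_{L^2}<1/n$ and (ii) $\|\psi_{m_n}\|_{L^2([-2R_n,2R_n]\times(-1,1))}<1/n$; condition (ii) is available for each prescribed $R_n$ by the Rellich--Kondrachov compactness applied on the bounded cylinder $[-2R_n,2R_n]\times(-1,1)$.

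Setting $\tilde\psi_n:=\chi_{R_n}\psi_{m_n}$, its support lies in $\{|s|\geq R_n\}$, so $(C_\varepsilon-\lambda)\tilde\psi_n=(D_\varepsilon-\lambda)\tilde\psi_n=\chi_{R_n}(D_\varepsilon-\lambda)\psi_{m_n}+[D_\varepsilon,\chi_{R_n}]\psi_{m_n}$. Using $\partial_s h_\varepsilon=0$, the commutator reduces to $-h_\varepsilon^{-2}(\chi_{R_n}''\psi_{m_n}+2\chi_{R_n}'\partial_s\psi_{m_n})$, whose $L^2$ norm is bounded by $CR_n^{-1}$ since $\|\chi_{R_n}'\|_\infty=O(R_n^{-1})$, $\|\chi_{R_n}''\|_\infty=O(R_n^{-2})$, and $\{\psi_{m_n}\}$ is $H^1$-bounded. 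Meanwhile condition (ii) gives $\|\tilde\psi_n\|_{L^2}\to 1$, and the pointwise vanishing $\chi_{R_n}\to 0$ together with dominated convergence yields $\tilde\psi_n\rightharpoonup 0$. After normalization, $\tilde\psi_n/\|\tilde\psi_n\|$ is a singular sequence for $C_\varepsilon$ at $\lambda$, hence $\lambda\in\sigma_{ess}(C_\varepsilon)$. The reverse inclusion $\sigma_{ess}(C_\varepsilon)\subset\sigma_{ess}(D_\varepsilon)$ follows by the same construction with the roles of $c_\varepsilon$ and $d_\varepsilon$ interchanged, once one notes that the $H^1$-boundedness of a singular sequence for $C_\varepsilon$ is also guaranteed by the coercivity of $c_\varepsilon$ modulo a bounded potential.

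The main technical obstacle is the diagonal extraction: the rate $R_n\to\infty$ must be slow enough so that for each $R_n$ a companion index $m_n$ exists achieving both (i) the approximate eigenfunction bound and (ii) the Rellich-based smallness on $[-2R_n,2R_n]\times(-1,1)$, while simultaneously being fast enough to keep the commutator error $CR_n^{-1}$ tending to zero. Verifying the $H^1$ boundedness of the initial singular sequences (for both operators) and writing the commutator in a form that is insensitive to the compactly supported modifications of the coefficients are the other points requiring care.
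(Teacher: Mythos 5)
Your proposal is correct and follows essentially the same route as the paper: both hinge on the observation that, because $\operatorname{supp}\beta\subset[-s_0,s_0]$, the forms $c_\varepsilon$ and $d_\varepsilon$ coincide on functions supported outside the compact cylinder, and both transplant singular sequences between the two operators by means of cutoffs in $s$ combined with a compactness argument and commutator estimates that vanish as the cutoff scale grows. The only difference is packaging: the paper isolates the cutoff construction in Lemma \ref{caracterizacaoespectro}, phrased with convergence of $(C_\varepsilon-\lambda\Id)\psi_n$ in the dual norm of $\dom c_\varepsilon$ (thereby sidestepping operator-domain questions for the low-regularity coefficients of $C_\varepsilon$), whereas you use the classical $L^2$ Weyl criterion together with Rellich compactness, which works equally well here since multiplication by a smooth cutoff in $s$ preserves the operator domains and the commutators live where the coefficients are those of $D_\varepsilon$.
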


The proof of this result is presented in Appendix \ref{appendix001}. 
As a consequence, we start to study the essential spectrum of $D_\varepsilon$.

Let ${\cal F}_s : L^2(\Lambda) \to L^2(\Lambda)$ be the Fourier transform with respect to $s$.
${\cal F}_s$ is a unitary operator and, for functions in $L^1(\Lambda)$, its action is given by
\begin{equation*}
	  ( {\cal F}_s \psi )(p,t) = 
	  \frac{1}{\sqrt{2\pi}} \int_\mathbb R e^{-i ps} \psi(s,t)\ds.
\end{equation*}
Then, the operator $\hat{D}_\varepsilon :=  {\cal F}_s D_\varepsilon {\cal F}_s^{-1}$ admits  the direct integral decomposition
\begin{equation}\label{decfou}
\hat{D}_\varepsilon = \int_\mathbb{R}^\oplus D_\varepsilon(p)\, \mathrm{d} p,
\end{equation}
where, for each $p \in \mathbb R$, $D_\varepsilon(p)$ is the self-adjoint operator associated with the
quadratic form
\[d_\varepsilon(p)(v) := \frac{1}{\varepsilon^2}\int_{-1}^{1} |\partial_t v |^2 \dt 
+ \int_{-1}^{1} Y_\varepsilon^p |v|^2 \dt,  \quad \dom d_\varepsilon(p) = H_0^1(-1,1), \]
where $Y_\varepsilon^p(t):= p^2/h_\varepsilon^2(t) + Y_\varepsilon^0(t)$. 
More precisely, 
\[D_\varepsilon(p) = - \frac{\partial_t^2}{\varepsilon^2} + Y_\varepsilon^p(t), \quad 
\dom D_\varepsilon(p) = H^2(-1,1)\cap H^1_0(-1,1);\]
the case $p=0$ corresponds to the operator defined by (\ref{onedimopp0}) in the Introduction.
Since $Y_\varepsilon^p \in C^\infty[-1,1]$, each $D_\varepsilon(p)$ has compact resolvent.
Denote by $\{\lambda_{\varepsilon, n}(p)\}_{n \in \mathbb N}$ the sequence of eigenvalues of $D_\varepsilon(p)$ and by
$\{u_{\varepsilon,n}(p)\}_{n \in \mathbb N}$ the sequence of the corresponding normalized eigenfunctions, i.e.,
\[D_\varepsilon(p) u_{\varepsilon,n}(p) = \lambda_{\varepsilon,n}(p) u_{\varepsilon,n}(p), \quad
n\in \mathbb N, \quad p\in\mathbb R.\]
Due to  the decomposition in (\ref{decfou}), we have
\begin{equation}\label{uniaoespectral}
\sigma (D_\varepsilon) = \cup_{p \in \mathbb R} \sigma( D_\varepsilon(p)) 
= \cup_{n\in\mathbb N}\left\{\lambda_{\varepsilon,n}(p) : p \in \mathbb R \right\}.
\end{equation}
In particular, denote $u_{\varepsilon,1}^0:=u_{\varepsilon,1}(0)$. Due to the condition (\ref{poseigenpot}) in the Introduction, $u_{\varepsilon,1}^0$ can be chosen to be real and positive in $(-1,1)$. This property will be used in the proof of Proposition \ref{proposition2} below.

\begin{Lemma}\label{limiteautovalores}
For each $n\in\mathbb N,$ $\lambda_{\varepsilon,n}(\cdot)$ is a real analytic function
in $p$ and 
\begin{equation*}
	\lim_{p\to \pm \infty} \lambda_{\varepsilon,n}(p) = \infty.
\end{equation*}
\end{Lemma}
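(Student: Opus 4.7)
The plan is to establish analyticity via Kato's analytic perturbation theory and the limit via a simple min-max comparison.

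For the analyticity, I would observe that the family $\{D_\varepsilon(p)\}_{p\in\mathbb R}$ shares the common domain $H^2(-1,1)\cap H^1_0(-1,1)$ and that the only $p$-dependent term in $D_\varepsilon(p)$ is the bounded multiplication operator $p^2/h_\varepsilon^2(t)$, which is polynomial (hence entire) in $p$. Extending $p$ to complex values, the potential $Y_\varepsilon^p(t)= p^2/h_\varepsilon^2(t)+Y_\varepsilon^0(t)$ remains uniformly bounded on $[-1,1]$ for $p$ in any compact subset of $\mathbb C$, so $p\mapsto D_\varepsilon(p)$ is a self-adjoint holomorphic family of type (A) in the sense of Kato (see, e.g., Reed--Simon IV, Chapter XII). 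Since each $D_\varepsilon(p)$ is a regular Sturm--Liouville operator with Dirichlet boundary conditions, its eigenvalues are all simple. By the Kato--Rellich theorem, the eigenvalues of a self-adjoint holomorphic family with compact resolvent can be labeled so as to depend analytically on the parameter along the real axis, and simple eigenvalues remain analytic in a complex neighborhood. Because the labels $\lambda_{\varepsilon,n}(p)$ are the eigenvalues in non-decreasing order and they never cross (being simple for every $p$), this standard labeling coincides with the analytic labeling, hence $\lambda_{\varepsilon,n}(\cdot)$ is real analytic in $p$.

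For the limit, I would use that $h_\varepsilon^2(t)=1+\gamma^2\varepsilon^2 t^2\le 1+\gamma^2\varepsilon^2$ on $[-1,1]$, so
\[
Y_\varepsilon^p(t)-Y_\varepsilon^0(t)=\frac{p^2}{h_\varepsilon^2(t)}\ge \frac{p^2}{1+\gamma^2\varepsilon^2}.
\]
Consequently, in the sense of quadratic forms,
\[
D_\varepsilon(p)\ge D_\varepsilon(0)+\frac{p^2}{1+\gamma^2\varepsilon^2}.
\]
Applying the Min-Max Principle to both sides yields
\[
\lambda_{\varepsilon,n}(p)\ge \lambda_{\varepsilon,n}(0)+\frac{p^2}{1+\gamma^2\varepsilon^2},
\]
and the right-hand side tends to $+\infty$ as $|p|\to\infty$, which gives the desired limit.

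There is no real obstacle here; the only point that requires care is that the natural (non-decreasing) labeling of the eigenvalues is the analytic one, which is guaranteed by the simplicity of the Sturm--Liouville spectrum (so no two analytic branches ever cross). The rest is a one-line form inequality followed by min-max.
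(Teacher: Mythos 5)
Your proposal is correct and follows essentially the same route as the paper: analyticity via Kato's type (A) perturbation theory for the family $D_\varepsilon(p)=D_\varepsilon(0)+p^2/h_\varepsilon^2$ with common domain, and divergence via the bound $p^2/h_\varepsilon^2(t)\ge p^2/(1+\gamma^2\varepsilon^2)$ combined with the Min-Max Principle. Your explicit remark that simplicity of the Sturm--Liouville eigenvalues prevents crossings, so the non-decreasing labeling is the analytic one, is a welcome refinement of a point the paper leaves implicit, but it does not change the argument.
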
 
\begin{proof} 
At first, note that $\dom D_\varepsilon(p) = \dom D_\varepsilon(0)$, for all $p \in \mathbb R$.
One can write $D_\varepsilon(p) = D_\varepsilon(0)+ p^2/h_\varepsilon^2$.	
Since $h_\varepsilon \geq 1$, it holds the estimate
$\|(p^2/h_\varepsilon^2) v \| \leq p^2 \|v\|$,
for all $v \in \dom D_\varepsilon(0)$,    for all $p \in \mathbb R$.
Then, $p^2/h_\varepsilon^2$  is $D_\varepsilon(0)$-bounded with relative bound zero.
Consequently, $\{D_\varepsilon(p): p \in \mathbb R\}$ is a type A analytic family.
By Theorem 3.9 in \cite{kato}, $\lambda_{\varepsilon,n}(\cdot)$ is a real analytic function in $p$.
		
Now, for each $v \in \dom d_\varepsilon(p)$, we have
\[d_\varepsilon(p)(v) = \frac{1}{\varepsilon^2}\int_{-1}^{1} |\partial_t v |^2 \dt 
+ \int_{-1}^{1} Y_\varepsilon^p |v|^2\dt
\geq 
\left(\frac{p^2}{1+\gamma^2 \varepsilon^2} - \varepsilon^2 \gamma^4 \right) \int_{-1}^1 |v|^2 \dt.\]
As a consequence, for each $n \in \mathbb N$, 
\[\lambda_{\varepsilon,n}(p) = d_{\varepsilon}(p) (u_{\varepsilon,n}(p))
\geq 
\frac{p^2}{1+\gamma^2 \varepsilon^2} - \varepsilon^2\gamma^4, 
\quad p \in \mathbb R.\]
Thus, we obtain the punctual limit $\lambda_{\varepsilon, n}(p) \to \infty$, as  $p \to \pm \infty.$ 
\end{proof}

\begin{Proposition}\label{proposition2}
One has $\sigma (D_\varepsilon) = [\lambda_{\varepsilon,1}(0),\infty).$
\end{Proposition}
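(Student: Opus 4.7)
The plan is to combine the spectral representation \eqref{uniaoespectral} coming from the direct integral \eqref{decfou} with monotonicity of the fiber potentials in $p$ and the coercivity at infinity established in Lemma \ref{limiteautovalores}. Both inclusions will then be essentially bookkeeping once the pointwise inequality $\lambda_{\varepsilon,1}(p)\geq \lambda_{\varepsilon,1}(0)$ is in hand.

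First I would show the lower bound $\inf \sigma(D_\varepsilon)=\lambda_{\varepsilon,1}(0)$. For every $p\in\mathbb R$ and $t\in(-1,1)$ the fiber potential satisfies
\[
Y_\varepsilon^p(t)-Y_\varepsilon^0(t)=\frac{p^2}{h_\varepsilon^2(t)}\geq 0,
\]
hence $d_\varepsilon(p)(v)\geq d_\varepsilon(0)(v)$ for every $v\in H_0^1(-1,1)$. By the Min-Max Principle this yields $\lambda_{\varepsilon,n}(p)\geq \lambda_{\varepsilon,n}(0)$ for each $n\in\mathbb N$, and in particular $\lambda_{\varepsilon,1}(p)\geq \lambda_{\varepsilon,1}(0)$. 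Combined with $\lambda_{\varepsilon,n}(p)\geq \lambda_{\varepsilon,1}(p)$ (a second application of Min-Max), this produces the uniform bound $\lambda_{\varepsilon,n}(p)\geq \lambda_{\varepsilon,1}(0)$ for every $n\in\mathbb N$ and $p\in\mathbb R$. Inserting this into \eqref{uniaoespectral} gives $\sigma(D_\varepsilon)\subseteq [\lambda_{\varepsilon,1}(0),\infty)$.

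Next I would prove the reverse inclusion by showing that the branch $p\mapsto \lambda_{\varepsilon,1}(p)$ alone already covers the whole half-line. Lemma \ref{limiteautovalores} gives that $\lambda_{\varepsilon,1}(\cdot)$ is real analytic (in particular continuous) on $\mathbb R$ and tends to $+\infty$ as $|p|\to\infty$; together with the previous step we know $\lambda_{\varepsilon,1}(0)$ is its global minimum. Therefore, given any $\mu\geq \lambda_{\varepsilon,1}(0)$, the Intermediate Value Theorem applied to $\lambda_{\varepsilon,1}(\cdot)$ on $[0,p_\mu]$ for some sufficiently large $p_\mu>0$ produces $p\in[0,p_\mu]$ with $\lambda_{\varepsilon,1}(p)=\mu$. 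Hence
\[
[\lambda_{\varepsilon,1}(0),\infty)\subseteq \{\lambda_{\varepsilon,1}(p):p\in\mathbb R\}\subseteq \sigma(D_\varepsilon),
\]
using \eqref{uniaoespectral} in the last inclusion, which closes the argument.

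No step looks genuinely difficult: the monotonicity $Y_\varepsilon^p\geq Y_\varepsilon^0$ is immediate from the explicit formula for $Y_\varepsilon^p$, and continuity plus divergence of $\lambda_{\varepsilon,1}(\cdot)$ has already been packaged in Lemma \ref{limiteautovalores}. The only mild point to be a bit careful about is citing the right version of Min-Max for the fiber operators (they have compact resolvent and common form domain $H_0^1(-1,1)$, so this is standard, e.g.\ Theorem XIII.1 of \cite{reed}). Note that the hypothesis \eqref{poseigenpot} is not actually required for this proposition; it plays its role later in Proposition \ref{proposition2}'s use of positivity of $u_{\varepsilon,1}^0$, but here the comparison only needs $p^2/h_\varepsilon^2\geq 0$.
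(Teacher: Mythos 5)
Your proposal is correct, but for one of the two inclusions it takes a genuinely different route from the paper. The reverse inclusion $[\lambda_{\varepsilon,1}(0),\infty)\subset\sigma(D_\varepsilon)$ is handled in the paper exactly as you do, only more tersely: real analyticity of $\lambda_{\varepsilon,1}(\cdot)$ and the divergence from Lemma \ref{limiteautovalores}, combined with \eqref{uniaoespectral}; your explicit Intermediate Value Theorem step just fleshes this out. For the inclusion $\sigma(D_\varepsilon)\subset[\lambda_{\varepsilon,1}(0),\infty)$, however, the paper does not argue fiberwise at all: it proves the two-dimensional form bound $d_\varepsilon(\psi)-\lambda_{\varepsilon,1}(0)\|\psi\|^2\geq 0$ for $\psi\in C_0^\infty(\Lambda)$ by the ground-state substitution $\psi=\phi\, u_{\varepsilon,1}^0$, an integration by parts, and the eigenvalue equation $D_\varepsilon(0)u_{\varepsilon,1}^0=\lambda_{\varepsilon,1}(0)u_{\varepsilon,1}^0$, which is precisely where the positivity of $u_{\varepsilon,1}^0$ (and hence the discussion around \eqref{poseigenpot}) enters. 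Your alternative — the pointwise monotonicity $Y_\varepsilon^p=Y_\varepsilon^0+p^2/h_\varepsilon^2\geq Y_\varepsilon^0$ together with the Min-Max Principle on the common form domain $H_0^1(-1,1)$, giving $\lambda_{\varepsilon,n}(p)\geq\lambda_{\varepsilon,1}(0)$ for all $n,p$ — is shorter and, as you correctly observe, makes no use of the positivity of the ground state; the price is that it relies on the exact spectral formula \eqref{uniaoespectral} for both inclusions, whereas the paper's ground-state argument yields the operator bound $D_\varepsilon\geq\lambda_{\varepsilon,1}(0)$ directly on the form $d_\varepsilon$ and is the same device that reappears in the construction of the trial functions $\psi_\delta=\phi\, u_{\varepsilon,1}^0$ in Section \ref{sectiondiscspec}. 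Both arguments are sound; only your parenthetical phrasing about \eqref{poseigenpot} playing its role ``later in Proposition \ref{proposition2}'' is slightly off, since it is used in the paper's own proof of this very proposition.
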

\begin{proof}
Since $\lambda_{\varepsilon, 1}(p)$ is a real analytic function in $p$, by (\ref{uniaoespectral}), we have
$[\lambda_{\varepsilon,1}(0),\infty) \subset  \sigma(D_\varepsilon).$
Now, we need to show that 
\begin{equation}\label{intersecao}
(-\infty, \lambda_{\varepsilon,1}(0)) \cap \sigma(D_\varepsilon) =  \emptyset.
\end{equation} 
Take $\psi \in C_0^\infty(\Lambda)$.
Since $u_{\varepsilon, 1}^0$ is positive, 
we can write $\psi(s,t) = \phi(s,t) u_{\varepsilon, 1}^0(t)$, with $\phi \in C_0^\infty(\Lambda)$.
Some calculations show that 
\begin{align*}
d_\varepsilon(\psi) -\lambda_{\varepsilon,1}(0) \int_\Lambda |\psi|^2 \ds \dt
= & 
\int_\Lambda \left(\frac{|\partial_s\phi|^2}{h_\varepsilon^2}
+ \frac{|\partial_t \phi|^2}{\varepsilon^2} \right) |u_{\varepsilon,1}^0|^2 \ds \dt 
+ \frac{2}{\varepsilon^2}\, {\cal R} \int_\Lambda  \overline{\phi}\, \partial_t\phi \, u_{\varepsilon,1}^0 
\partial_t u_{\varepsilon,1}^0 \ds \dt \\
& +  
\frac{1}{\varepsilon^2} \int_\Lambda |\phi|^2 |\partial_t u_{\varepsilon,1}^0|^2 \ds \dt
+ \int_\Lambda |\phi|^2 (Y_\varepsilon^0 |u_{\varepsilon,1}^0|^2 
- \lambda_{\varepsilon,1}(0) |u_{\varepsilon,1}^0|^2) \ds \dt.
\end{align*}
An integration by parts, and since 
$D_\varepsilon(0) u_{\varepsilon,1}^0 = \lambda_{\varepsilon,1}(0) u_{\varepsilon,1}^0$, one has
\[d_\varepsilon(\psi) - \lambda_{\varepsilon,1}(0) \int_\Lambda |\psi|^2 \ds \dt  =  
\int_\Lambda\left(\frac{|\partial_s\phi|^2}{h_\varepsilon^2} 
+ \frac{|\partial_t \phi|^2}{\varepsilon^2}\right) |u_{\varepsilon,1}^0|^2 \ds \dt
\geq 0.\]
Then, we obtain (\ref{intersecao}).
\end{proof}

\begin{proof}[\bf Proof of Theorem \ref{essentialspectheo}]
Apply Propositions  \ref{proposition1} and \ref{proposition2}.	
\end{proof}

\begin{Remark}\label{remarlasymplam}
{\rm
For the sequence $\{\lambda_{\varepsilon,1}(0)\}_\varepsilon$, we have the estimate:
there exists $K > 0$ so that
\[\int_{-1}^{1} |\partial_t v|^2 \dt 
\leq
\varepsilon^2 d_\varepsilon(0)(v)=
\int_{-1}^{1} \left(|\partial_t v|^2 + \varepsilon^2 Y_\varepsilon^0  |v|^2 \right) \dt
\leq 
\int_{-1}^{1} |\partial_t v|^2 \dt + K \varepsilon^2  \int_{-1}^{1} |v|^2 \dt,
\]
for all $v \in H_0^1(-1,1)$, for all $\varepsilon > 0$ small enough.
Consequently,
\[\left(\frac{\pi}{2}\right)^2 \leq \varepsilon^2 \lambda_{\varepsilon, 1}(0) \leq  
\left(\frac{\pi}{2}\right)^2 + O(\varepsilon^2).\]
}
\end{Remark}

\section{Discrete spectrum}\label{sectiondiscspec}

Based in \cite{gold}, the main idea in the proofs of Theorems \ref{intmaintheodisspec} and \ref{casetheodisspec} is to find
a function $\psi \in \dom c_\varepsilon$ so that
$(c_\varepsilon(\psi)-\lambda_{\varepsilon,1}(0)\|\psi\|^2)/\|\psi\|^2 < 0.$

\begin{proof}[\bf Proof of Theorem \ref{intmaintheodisspec}]
Take $\delta>0$. Define $\psi_\delta(s,t) := \phi(s) u_{\varepsilon,1}^0(t)$, where
\[\phi(s) :=\left\{\begin{array}{ll}
e^{\delta (s+s_0)},  &   s\leq -s_0,\\
1, &    -s_0\leq s \leq s_0,\\
e^{-\delta (s-s_0)},  &  s\geq s_0.
\end{array}\right.\]
Note that $\psi_\delta\in\dom c_\varepsilon$.  
Some calculations show that
\begin{align*}
c_\varepsilon(\psi_\delta) - \lambda_{\varepsilon,1}(0) \int_\Lambda |\psi_\delta|^2 \ds \dt  =
&
\int_\Lambda \frac{|\partial_s \psi_\delta|^2}{f_\varepsilon^2}\ds \dt
+\frac{1}{4}\int_\Lambda \frac{\left(\partial_s f_\varepsilon\right)^2}{f_\varepsilon^4}|\psi_\delta|^2\ds \dt 
- {\cal R} \int_{\Lambda} \frac{\partial_s f_\varepsilon}{f_\varepsilon^3} \overline{\psi_\delta}\,\partial_s \psi_\delta \ds \dt \\
& + 
\frac{1}{\varepsilon^2} \int_\Lambda|\partial_t \psi_\delta|^2\ds\dt - \lambda_{\varepsilon, 1}(0) \int_\Lambda |\psi_\delta|^2 \ds \dt 
+ \int_\Lambda V_\varepsilon |\psi_\delta|^2 \ds \dt \\
= &
\int_\Lambda \frac{|\partial_s \psi_\delta|^2}{f_\varepsilon^2}\ds \dt 
+\frac{1}{4}\int_\Lambda \frac{\left(\partial_s f_\varepsilon\right)^2}{f_\varepsilon^4}|\psi_\delta|^2\ds \dt
- {\cal R} \int_{\Lambda} \frac{\partial_s f_\varepsilon}{f_\varepsilon^3} \overline{\psi_\delta}\,\partial_s \psi_\delta \ds \dt \\
& +
\int_\Lambda (V_\varepsilon -  Y_\varepsilon^0) |\psi_\delta|^2 \ds \dt.
\end{align*}

Now, note that $f_\varepsilon \to 1$, $\partial_s f_\varepsilon \to 0$,
$(V_\varepsilon - Y_\varepsilon^0) \to (|\Theta'|^2 - \gamma^2)/2$, uniformly, as $\varepsilon \to 0$.
Then,
\begin{equation*}
c_\varepsilon(\psi_\delta) -\lambda_{\varepsilon,1}(0)\|\psi_\delta\|^2
		 \to \delta 
		+ \frac{1}{2}\int_{-s_0}^{s_0}  \left(|\Theta'(s)|^2 - \gamma^2\right) \ds,
	\end{equation*}
as $\varepsilon \to 0$. Since 
$\|\psi_\delta\|^2 = 2s_0+\delta^{-1}$,
one has
\begin{equation}\label{indeltadisc}
\frac{c_\varepsilon(\psi_\delta)-\lambda_{\varepsilon,1}(0)\|\psi_\delta\|^2}{\|\psi_\delta\|^2} 
\to 
O(\delta^2) +
\frac{\delta}{2} \int_{-s_0}^{s_0} \left(|\Theta'(s)|^2 - \gamma^2\right) \ds,
\end{equation}
as $\varepsilon \to 0$.

Since $\int_{-s_0}^{s_0} (|\Theta'(s)|^2 - \gamma^2)\ds < 0$,  we can choose
$\delta$ small enough so that the limit in (\ref{indeltadisc}) is negative. Consequently,
there exists $\varepsilon_1>0$ so that 
\[\frac{c_\varepsilon(\psi_\delta)-\lambda_{\varepsilon,1}(0)\|\psi_\delta\|^2}{\|\psi_\delta\|^2} < 0,\]
for all $\varepsilon \in (0, \varepsilon_1)$.
\end{proof}

\begin{proof}[\bf Proof of Theorem \ref{casetheodisspec}]
Given $\delta>0$ and $\eta>0$, define 
$\psi_{\delta,\eta}(s,t) := \phi_\eta(s) u_{\varepsilon,1}^0(t),$
where	
\[\phi_\eta(s) :=\left\{\begin{array}{ll}
e^{\delta (s+s_0)},  & s\leq -s_0,\\
1 +\eta\, (\gamma - |\Theta'(s)|), & -s_0\leq s \leq s_0,\\
e^{-\delta (s-s_0)},  & s\geq s_0.
\end{array}\right.\]
Note that $\psi_{\delta,\eta}\in\dom c_\varepsilon.$ Similarly as in the proof of Theorem \ref{intmaintheodisspec},  we can show that
\begin{equation*}
c_\varepsilon(\psi_{\delta,\eta}) -\lambda_{\varepsilon,1}(0)\|\psi_{\delta,\eta}\|^2	
\to
\delta 
+ O(\eta^2)
-\eta \int_{-s_0}^{s_0} \left(|\Theta'(s)| - \gamma\right)^2 \left(|\Theta'(s)| + \gamma\right){\rm d}s,
\end{equation*}
as $\varepsilon\to0.$ 
Since $\|\psi_{\delta,\eta}\|^2 = 2s_0+\delta^{-1} +  O(\eta^2),$
one has
\begin{equation} \label{indeltadisc2}
\frac{c_\varepsilon(\psi_{\delta,\eta})-\lambda_{\varepsilon,1}(0)\|\psi_{\delta,\eta}\|^2}{\|\psi_{\delta,\eta}\|^2} 
\to
O(\delta^2) + \delta  O(\eta^2) - \delta  \eta \int_{-s_0}^{s_0} \left(|\Theta'(s)| - \gamma\right)^2 \left(|\Theta'(s)| + \gamma\right)\ds,
\end{equation}
as $\varepsilon\to0.$
Taking $\eta = \sqrt{\delta},$ again we can choose $\delta$ small enough so that the limit in (\ref{indeltadisc2}) is negative.  
Then, there exists $\varepsilon_2>0$ so that 
\begin{equation*}
\frac{c_\varepsilon(\psi_{\delta,\eta})-\lambda_{\varepsilon,1}(0)\|\psi_{\delta,\eta}\|^2}{\|\psi_{\delta,\eta}\|^2} 
<0,
\end{equation*}
for all $\varepsilon\in(0,\varepsilon_2)$.
\end{proof}

\section{Thin strips}\label{sectionthinstrips}

In this section we present the proof of Theorem \ref{theothinstripdiv} stated in the Introduction.
The strategy will be to establish upper and lower bounds for the eigenvalues
$\lambda_j(-\tilde{\Delta}_\varepsilon)$. 
Recall $-\tilde{\Delta}_\varepsilon$ is the self-adjoint operator
associated with the quadratic form $\tilde{a}_\varepsilon(\varphi)$; see (\ref{atildintsecd}).
Then, the analysis will be based on estimates for $\tilde{a}_\varepsilon(\varphi)$.

Define
\[\tilde{f}_\varepsilon(s,t):= \sqrt{1+ |\Theta_\varepsilon'(s)|^2 \varepsilon^2 t^2},\]
and consider the Hilbert space 
${\cal H}_\varepsilon := L^2(\Lambda, \tilde{f}_\varepsilon\ds \dt)$; the norm in this space is denoted by
$\| \cdot \|_{\cal H_\varepsilon}$.
Performing a change of coordinates similar to that in Section \ref{secaochange}, 
$\tilde{a}_\varepsilon(\varphi)$ becomes
\[\tilde{b}_\varepsilon(\psi) :=  
\int_\Lambda \frac{|\partial_s \psi|^2}{\tilde{f}_\varepsilon} \ds \dt
+ \frac{1}{\varepsilon^2} \int_\Lambda  |\partial_t \psi|^2 \tilde{f}_\varepsilon \ds \dt,\]
$\dom \tilde{b}_\varepsilon = H_0^1(\Lambda) \subset {\cal H}_\varepsilon$.

\vspace{0.3cm}
\noindent
{\bf Upper bound.}
Denote by $\chi_1(t) := \cos(\pi t/2)$ the first eigenfunction of the Dirichlet Laplacian $-\Delta_{(-1,1)}^D$ in $L^2(-1,1)$;
$(\pi/2)^2$ is the eigenvalue associated with $\chi_1$.
Consider the subspace 
\[{\cal A}_\varepsilon:=\{\varphi_w:=w(s)\chi_1(t) (\tilde{f}_\varepsilon(s,t))^{-1/2}: w \in H^1(\mathbb R)\}\]
of the Hilbert space
${\cal H}_\varepsilon$. 
The identification $w \mapsto \varphi_w$, $w \in H^1(\mathbb R)$, motivates the definition of the one-dimensional
quadratic form
\[m_\varepsilon(w) := \tilde{b}_\varepsilon(\varphi_w) - (\pi/2\varepsilon)^2 \| \varphi_w\|_{{\cal H}_\varepsilon}^2,\]
$\dom m_\varepsilon := H^1(\mathbb R)$. Denote by $M_\varepsilon$ the self-adjoint operator associated with
$m_\varepsilon (w)$.
In particular, for each $j \in \mathbb N$,
\begin{equation}\label{ineigfir}
\lambda_j(-\tilde{\Delta}_\varepsilon) - \left( \frac{\pi}{2 \varepsilon}  \right)^2 \leq \lambda_j(M_\varepsilon). 
\end{equation}
We are going to get upper bounds for the values $\lambda_j(M_\varepsilon)$.

Define the function
\[ W_\varepsilon(s,t) := 
- \frac{7 |(\Theta_\varepsilon' \cdot \Theta_\varepsilon'')(s)|^2 \varepsilon^4 t^4}{4\tilde{f}_\varepsilon^6 (s,t)}
+ \frac{(2|\Theta_\varepsilon''(s)|^2 + 2(\Theta_\varepsilon' \cdot \Theta_\varepsilon''')(s) - 3 |\Theta_\varepsilon'(s)|^4 )\varepsilon^2 t^2}{4 \tilde{f}_\varepsilon^4 (s,t)}
%
+\frac{|\Theta_\varepsilon'(s)|^2}{2\tilde{f}_\varepsilon^2 (s,t)}.
\]
Recall that we have the condition (V) in the Introduction, we get the estimates
\[\|(1/\tilde{f}_\varepsilon^2) - 1 \|_{{L^\infty(\Lambda)}} \leq  \varepsilon^{2-2a}, \quad 
%
\| W_\varepsilon 
- |\Theta'_\varepsilon|^2/2\|_{L^\infty(\Lambda)}
\leq 
K (\varepsilon^{4-2(a+b) } + \varepsilon^{2-2b}+ \varepsilon^{2-(a+c)} + \varepsilon^{2-4a}),\]
for some $K> 0$, for all $\varepsilon > 0$ small enough.
Finally, some calculations show that
\begin{align*}
m_\varepsilon(w)  
& =
\int_\Lambda \frac{|w'\chi_1|^2}{\tilde{f}_\varepsilon^2} \ds\dt
+\int_\Lambda W_\varepsilon |w \chi_1|^2 \ds \dt \\
& \leq 
\int_\mathbb R \left(|w'|^2 + 
\frac{|\Theta_\varepsilon'(s)|^2}{2}|w|^2 \right) \ds 
+ O(\varepsilon^{d}) \int_\mathbb R |w|^2 \ds,
\end{align*}
for all $w \in H^1(\mathbb R)$, for all $\varepsilon > 0$ small enough, where
$d = \min\{ 4-2(a+b), 2-2b, 2-(a+c), 2 - 4a\}$.  As a consequence, for each $j \in \mathbb N$, 
\begin{equation}\label{v7mvarineq}
\lambda_j(M_\varepsilon) \leq 
\lambda_j \left(-\Delta_\mathbb R  + \frac{|\Theta'_\varepsilon(s)|^2}{2} \Id \right) + O(\varepsilon^{d}).
\end{equation}
By (\ref{ineigfir}) and (\ref{v7mvarineq}), for each $j \in \mathbb N$,
\begin{equation}\label{upperinvsev}
\lambda_j(-\tilde{\Delta}_\varepsilon) - \left( \frac{\pi}{2 \varepsilon}  \right)^2  \leq 
\lambda_j \left(-\Delta_\mathbb R  + \frac{|\Theta'_\varepsilon(s)|^2}{2} \Id \right) + O(\varepsilon^{d}).
\end{equation}

\vspace{0.3cm}
\noindent
{\bf Lower bound.} 
For each $\varepsilon \geq 0$ small enough, consider  
the one-dimensional  self-adjoint operator
\[(S_\varepsilon v)(t) : = -v''(t) - \frac{\varepsilon t}{1+\varepsilon t^2} v'(t), \quad \dom S_\varepsilon = H_0^1(-1,1),\]
acting in the Hilbert space $L^2((-1,1), \sqrt{1 + \varepsilon t^2} \dt)$.
The particular case  $\varepsilon = 0$ corresponds to the Dirichlet Laplacian operator $-\Delta_{(-1,1)}^D$ 
in $L^2(-1,1)$.

Denote by $\Sigma(\varepsilon)$ the first eigenvalue of $S_\varepsilon$.
By the analytic perturbation theory, we can write 
\[\Sigma(\varepsilon) = \left(\frac{\pi}{2}\right)^2 + \delta(\varepsilon) \varepsilon + O(\varepsilon^2),\]
where
\[\delta(\varepsilon):= - \int_{-1}^1 \frac{t}{\sqrt{1+\varepsilon t^2}} \chi_1'(t) \chi_1(t) \dt;\]
see \cite{katotosio} for more details.
Consequently, for each $\psi \in H_0^1(\Lambda)$, we have the estimate
\[\tilde{b}_\varepsilon(\psi) \geq 
\int_\Lambda \left( \frac{|\partial_s \psi|^2}{\tilde{f}_\varepsilon} + 
\frac{\Sigma(\varepsilon^2 |\Theta_\varepsilon'(s)|^2)}{\varepsilon^2}  \tilde{f}_\varepsilon 
|\psi|^2 \right) \ds \dt.\]
More exactly,
\begin{equation}\label{ineqpertanaly}
\tilde{b}_\varepsilon(\psi) - \left( \frac{\pi}{2 \varepsilon} \right)^2 \| \psi  \|_{{\cal H}_\varepsilon}^2 
\geq  
\int_\Lambda \left(\frac{|\partial_s \psi|^2}{\tilde{f}_\varepsilon} + 
|\Theta'_\varepsilon(s)|^2 \delta(\varepsilon^2 |\Theta_\varepsilon'(s)|^2) \tilde{f}_\varepsilon |\psi|^2   \right) \ds \dt + O(\varepsilon^{2-4a})\|\psi\|_{\cal H_\varepsilon}^2.
\end{equation}

Now, define the quadratic form
\[n_\varepsilon(\psi) := 
\int_\Lambda \left(\frac{|\partial_s \psi|^2}{\tilde{f}_\varepsilon} + 
|\Theta_\varepsilon'(s)|^2 \delta(\varepsilon^2 |\Theta_\varepsilon'(s)|^2) \tilde{f}_\varepsilon |\psi|^2   \right)\ds \dt,\]
$\dom n_\varepsilon = H_0^1(\Lambda)$. Denote by $N_\varepsilon$ the self-adjoint operator
associated with $n_\varepsilon(\psi)$.
For each $j \in \mathbb N$, inequality (\ref{ineqpertanaly}) implies
\begin{equation}\label{mvarleqadd}
\lambda_j(N_\varepsilon) + O(\varepsilon^{2-4a}) \leq 
\lambda_j(-\tilde{\Delta}_\varepsilon) - \left( \frac{\pi}{2 \varepsilon} \right)^2.
\end{equation}
The next step is to find lower bounds for the values $\lambda_j(N_\varepsilon)$.

\begin{Lemma}\label{lemKanaly}
There exists a number $K > 0$ so that
\[ \left\|  |\Theta'_\varepsilon(s)|^2 \delta(\varepsilon^2 |\Theta_\varepsilon'(s)|^2) \tilde{f}_\varepsilon   -
\frac{|\Theta'_\varepsilon(s)|^2}{2}  
\right\|_{L^\infty(\Lambda)} \leq K \varepsilon^{1-3a},\]
for all $\varepsilon > 0$ small enough.
\end{Lemma}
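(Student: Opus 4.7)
The strategy is to expand $\delta$ around $0$ and $\tilde f_\varepsilon$ around $1$, and then to promote the pointwise smallness of the combined error to an $L^\infty(\Lambda)$ bound by using the upper estimate $|\Theta_\varepsilon'(s)|^2 \le K^2 \varepsilon^{-2a}$ coming from (VI). The first step is to compute $\delta(0)$ explicitly: writing $t\chi_1'\chi_1 = (t/2)(\chi_1^2)'$, integrating by parts, and using $\chi_1(\pm 1) = 0$ together with $\int_{-1}^1 \chi_1^2 \,\dt = 1$ (a direct check for $\chi_1(t) = \cos(\pi t/2)$), one obtains
\[ \delta(0) = -\int_{-1}^{1} t\,\chi_1'(t)\chi_1(t)\,\dt = \frac{1}{2}\int_{-1}^{1} \chi_1^2(t)\,\dt = \frac{1}{2}. \]

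Next, I would establish the quantitative expansion $|\delta(u) - 1/2| \le C u$ valid for small $u \ge 0$. Differentiating under the integral sign yields
\[ \delta'(u) = \frac{1}{2}\int_{-1}^{1} \frac{t^3}{(1+ut^2)^{3/2}}\,\chi_1'(t)\,\chi_1(t)\,\dt, \]
which is continuous, hence bounded, on any compact neighbourhood of $0$, so that the mean value theorem supplies the bound. Because (VI) gives $u(s,\varepsilon):=\varepsilon^2 |\Theta_\varepsilon'(s)|^2 \le K^2\varepsilon^{2-2a} \to 0$ uniformly in $s$, the expansion is applicable on all of $\Lambda$ for $\varepsilon$ small. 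In parallel, from $\sqrt{1+\xi} - 1 \le \xi/2$ with $\xi = u t^2 \le u$, one deduces $\|\tilde f_\varepsilon - 1\|_{L^\infty(\Lambda)} = O(\varepsilon^{2-2a})$ and $\|\tilde f_\varepsilon\|_{L^\infty(\Lambda)} = O(1)$.

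To conclude I would use the algebraic splitting
\[ |\Theta_\varepsilon'|^2 \bigl[\delta(u)\tilde f_\varepsilon - \tfrac{1}{2}\bigr] = |\Theta_\varepsilon'|^2\,\delta(u)\bigl(\tilde f_\varepsilon - 1\bigr) + |\Theta_\varepsilon'|^2\bigl(\delta(u) - \tfrac{1}{2}\bigr), \]
together with the three estimates above, so that both terms on the right are bounded in $L^\infty(\Lambda)$ by $K' \varepsilon^{-2a}\cdot\varepsilon^{2-2a} = K' \varepsilon^{2-4a}$. Since $a < 1/3 < 1$, one has $\varepsilon^{2-4a} = \varepsilon^{1-a}\,\varepsilon^{1-3a} \le \varepsilon^{1-3a}$ for $\varepsilon \in (0,1]$, which delivers the claimed inequality. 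The only mildly delicate point is producing the \emph{quantitative} remainder $|\delta(u) - 1/2| \le Cu$ rather than merely the continuity $\delta(u) \to 1/2$; this is the one place where differentiation under the integral sign is invoked, and everything else reduces to a short computation with condition (VI).
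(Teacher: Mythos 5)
Your proof is correct and follows essentially the same route as the paper: both split the error into a $\bigl(\delta(\varepsilon^2|\Theta_\varepsilon'|^2)-\tfrac12\bigr)$ part and a $(\tilde f_\varepsilon-1)$ part, quantify each using $|\Theta_\varepsilon'|\le K\varepsilon^{-a}$ from (VI), the paper doing so via the exact identity $\delta-\tfrac12=\tfrac{\pi}{2}\int_{-1}^{1}(1/\tilde f_\varepsilon-1)\,t\sin(\pi t/2)\cos(\pi t/2)\,\dt$ while you use $\delta(0)=\tfrac12$ plus a bound on $\delta'$. Your bookkeeping even yields the slightly sharper rate $O(\varepsilon^{2-4a})$, which implies the stated $K\varepsilon^{1-3a}$ since $a<1$.
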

\begin{proof}
At first, note that
\begin{align*}
\left|  |\Theta'_\varepsilon(s)|^2 \delta(\varepsilon^2 |\Theta_\varepsilon'(s)|^2) \tilde{f}_\varepsilon   -
\frac{|\Theta'_\varepsilon(s)|^2}{2}  
\right| 
& \leq
|\Theta'_\varepsilon(s)|^2 \left(\left|  \delta(\varepsilon^2 |\Theta_\varepsilon'(s)|^2)   -
\frac{1}{2}  
\right| |\tilde{f}_\varepsilon| 
+ \frac{1}{2} |\tilde{f}_\varepsilon - 1| \right),
\end{align*}
for all $(s,t)\in \Lambda$.
Some calculations show that
\begin{equation*}
\delta(\varepsilon^2 |\Theta_\varepsilon'(s)|^2) - \frac{1}{2} 
=  \frac{\pi}{2} \int_{-1}^{1}
\left(\frac{1}{\tilde{f}_\varepsilon} - 1\right) 
t \sin \left(\frac{\pi t}{2}\right) 
\cos\left(\frac{\pi t}{2} \right) \dt.
\end{equation*}
Since $\|(1/\tilde{f}_\varepsilon) -1\|_{L^\infty(\Lambda)} \leq \varepsilon^{1-a},$ 
we have the estimate
$\|\delta(\varepsilon^2 |\Theta_\varepsilon'(s)|^2) - 1/2 \|_{L^\infty(\Lambda)}
\leq \varepsilon^{1-a}/2$. 
Thus, along with the condition $|\Theta_\varepsilon'(s)|\leq K/\varepsilon^a$ and the estimate $\|\tilde{f}_\varepsilon-1\|_{L^\infty(\Lambda)} \leq \varepsilon^{1-a}$, we get the result.
\end{proof}

Using Lemma \ref{lemKanaly} and the estimate $\|1/\tilde{f}_\varepsilon - 1\|_{L^\infty(\Lambda)} \leq \varepsilon^{1-a}$,  one has 
\begin{equation*}
n_\varepsilon(\psi) 
 \geq 
(1+O(\varepsilon^{1-a}))
\int_\Lambda |\partial_s \psi|^2 \ds \dt +
\int_\Lambda  
\frac{|\Theta_\varepsilon'(s)|^2}{2} |\psi|^2 \ds \dt + O(\varepsilon^{1-3a}) \int_\Lambda |\psi|^2 \ds \dt. 
\end{equation*}
Since $|\Theta_\varepsilon'(s)| \leq K/\varepsilon^a,$ for all $s\in\mathbb R,$ it follows that
\begin{equation*}
n_\varepsilon(\psi) 
\geq 
(1+O(\varepsilon^{1-a}))
\int_\Lambda \left( |\partial_s \psi|^2 +
\frac{|\Theta_\varepsilon'(s)|^2}{2} |\psi|^2 \right) \ds \dt,
\end{equation*}
for all $\psi \in H_0^1(\Lambda)$, for all $\varepsilon > 0$ small enough.

As a consequence, for each $j \in \mathbb N$,
\begin{equation}\label{inv7lowerb}
(1+O(\varepsilon^{1-a})) \lambda_j\left(-\Delta_\mathbb R + \frac{|\Theta_\varepsilon'(s)|^2}{2} \Id\right) 
\leq
\lambda_j(N_\varepsilon).
\end{equation}

Inequalities (\ref{mvarleqadd}) and  (\ref{inv7lowerb}) ensure that, for each $j \in \mathbb N$, 
\begin{equation}\label{ineqlower}
(1+O(\varepsilon^{1-a})) \lambda_j\left(-\Delta_\mathbb R  + \frac{|\Theta'_\varepsilon(s)|^2}{2} \Id \right) + O(\varepsilon^{2-4a})
\leq
\lambda_j(-\tilde{\Delta}_\varepsilon) - \left(\frac{\pi}{2 \varepsilon}\right)^2.
\end{equation}

\vspace{0.3cm}

Due to inequalities (\ref{upperinvsev}) and (\ref{ineqlower}), we will study the spectral problem
of the operator $-\Delta_\mathbb R + (|\Theta_\varepsilon'(s)|^2/2) \Id$.

\begin{Proposition}\label{proprooftheoint02}
For each $j \in \mathbb N$,
\begin{equation}\label{thetainfsupps}
\lim_{\varepsilon \to 0} \lambda_j\left(-\Delta_\mathbb R  + \frac{|\Theta'_\varepsilon(s)|^2}{2} \Id \right) =
\lambda_j\left(-\Delta_\mathbb R  + \frac{|\Theta'(s)|^2}{2} \Id \right).
\end{equation}
\end{Proposition}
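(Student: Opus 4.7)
The plan is to establish matching upper and lower bounds via the min-max principle, exploiting the monotonicity hypotheses (IV) and (VII). Writing $V_\varepsilon := |\Theta_\varepsilon'|^2/2$ and $V := |\Theta'|^2/2$, the upper bound $\lambda_j(-\Delta_\mathbb{R} + V_\varepsilon \Id) \leq \lambda_j(-\Delta_\mathbb{R} + V \Id)$ is immediate from (IV) and min-max. Combined with (VII), which says $V_\varepsilon$ is pointwise non-decreasing as $\varepsilon \to 0$, this shows that $\varepsilon \mapsto \lambda_j(-\Delta_\mathbb{R} + V_\varepsilon \Id)$ is non-decreasing with a limit $L_j \leq \lambda_j(-\Delta_\mathbb{R} + V \Id)$, so the entire task reduces to proving $L_j \geq \lambda_j(-\Delta_\mathbb{R} + V \Id)$.

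To this end, I would take, for each small $\varepsilon$, orthonormal eigenfunctions $\psi_1^\varepsilon, \ldots, \psi_j^\varepsilon$ of $-\Delta_\mathbb{R} + V_\varepsilon \Id$ corresponding to its first $j$ eigenvalues. The upper bound yields $\|(\psi_k^\varepsilon)'\|_{L^2}^2 + \int V_\varepsilon |\psi_k^\varepsilon|^2 \ds \leq \lambda_j(-\Delta_\mathbb{R} + V \Id)$, so the family is uniformly bounded in $H^1(\mathbb{R})$. For tightness, fix $R > 0$: by (III) and (V), for $\varepsilon$ small enough one has $[-R, R] \subset I_\varepsilon$, and the U-shaped monotonicity of $|\Theta_\varepsilon'|$ combined with $\Theta_\varepsilon = \Theta$ on $I_\varepsilon$ gives $V_\varepsilon(s) \geq m_R := \min\{V(R), V(-R)\}$ for all $|s| > R$; condition (I) forces $m_R \to \infty$ as $R \to \infty$. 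Hence $\int_{|s| > R} |\psi_k^\varepsilon|^2 \ds \leq \lambda_j(-\Delta_\mathbb{R} + V \Id) / m_R$ uniformly in small $\varepsilon$. By Rellich--Kondrachov and a diagonal argument, along a subsequence $\psi_k^\varepsilon \to \psi_k$ strongly in $L^2(\mathbb{R})$ and weakly in $H^1(\mathbb{R})$, and the orthonormality relations pass to the limit.

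To close the argument, given $(a_1, \ldots, a_j) \in \mathbb{C}^j$ with $\sum |a_k|^2 = 1$, I set $\psi := \sum_k a_k \psi_k$ and $\psi^\varepsilon := \sum_k a_k \psi_k^\varepsilon$; then $\psi^\varepsilon \to \psi$ strongly in $L^2$ and weakly in $H^1$. Weak lower semicontinuity of $\psi \mapsto \|\psi'\|_{L^2}^2$, together with Fatou's lemma applied on each $[-R, R]$ (where $V_\varepsilon = V$ for $\varepsilon$ small enough) and then $R \to \infty$, yields
\[
\int_\mathbb{R} \bigl(|\psi'|^2 + V |\psi|^2\bigr) \ds \leq \liminf_{\varepsilon \to 0} \int_\mathbb{R} \bigl(|(\psi^\varepsilon)'|^2 + V_\varepsilon |\psi^\varepsilon|^2\bigr) \ds = \lim_{\varepsilon \to 0} \sum_k |a_k|^2 \lambda_k(-\Delta_\mathbb{R} + V_\varepsilon \Id) \leq L_j.
\]
Applying min-max to the $j$-dimensional subspace $\mathrm{span}(\psi_1, \ldots, \psi_j)$ then gives $\lambda_j(-\Delta_\mathbb{R} + V \Id) \leq L_j$, which closes the proof. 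The main obstacle is the uniform $L^2$-tightness of the $\psi_k^\varepsilon$: the bound $V_\varepsilon \leq V$ points the wrong way for confinement, and one really needs the monotonicity in (V) coupled with the divergence in (I) to prevent mass from escaping to infinity, without which the weak limits could fail to be orthonormal and the final min-max argument would collapse.
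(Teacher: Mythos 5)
Your argument is correct, but it takes a genuinely different route from the paper's. The paper works at the resolvent level: the pointwise convergence $|\Theta_\varepsilon'|\to|\Theta'|$ from (III) gives strong resolvent convergence of $Y_\varepsilon:=-\Delta_{\mathbb R}+\tfrac12|\Theta_\varepsilon'|^2\Id$ to $Y:=-\Delta_{\mathbb R}+\tfrac12|\Theta'|^2\Id$ via the core $C_0^\infty(\mathbb R)$; condition (VII) yields the operator inequality $(Y_\varepsilon+c\Id)^{-1}\leq(Y_{\varepsilon^*}+c\Id)^{-1}$ for $\varepsilon<\varepsilon^*$; and (\ref{conthetdernu12}) with (V) bounds the distance of $(Y_{\varepsilon^*}+c\Id)^{-1}$ to the compact operators by $2(\varepsilon^*)^{2a}$. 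Proposition 5.3 of \cite{solomyak} then upgrades this to norm resolvent convergence, from which the eigenvalue convergence follows. You instead argue purely variationally: (IV) and (VII) give the monotone upper bound, and the reverse inequality comes from a tightness/compactness argument on the eigenfunctions $\psi_k^\varepsilon$, where your confinement bound $V_\varepsilon\geq m_R$ outside $[-R,R]$ (from (I), (III), (V)) plays exactly the role that the distance-to-compacts estimate plays in the paper. Your proof is more elementary and self-contained (no appeal to an external abstract convergence result), at the price of delivering only convergence of the min-max values rather than norm resolvent convergence; for the proposition as stated that is all that is needed. One small point you should make explicit: before choosing orthonormal eigenfunctions $\psi_k^\varepsilon$, note that for $\varepsilon$ small (depending on $j$) the values $\lambda_1(Y_\varepsilon),\dots,\lambda_j(Y_\varepsilon)$ lie strictly below $\inf\sigma_{ess}(Y_\varepsilon)$ and hence are genuine discrete eigenvalues; this follows from the same bound you already derived, since $V_\varepsilon\geq m_R$ outside a compact set forces $\inf\sigma_{ess}(Y_\varepsilon)\geq m_R$, which exceeds $\lambda_j(Y)\geq\lambda_j(Y_\varepsilon)$ once $R$ is large and $\varepsilon$ is small.
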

\begin{proof}
Recall the conditions (I)-(VII) in the Introduction.
Consider the quadratic forms
\[y (w) := 
\int_\mathbb R \left( |w'|^2 + \frac{|\Theta'(s)|^2}{2} |w|^2 \right)\ds,
\quad \dom y = \{w \in H^1(\mathbb R): y(w) < \infty\},\]
and, for each $\varepsilon > 0$ small enough, 
\[y_\varepsilon(w) := 
\int_\mathbb R \left( |w'|^2 + \frac{|\Theta_\varepsilon'(s)|^2}{2} |w|^2 \right)\ds,
\quad \dom y_\varepsilon = H^1(\mathbb R);\]
denote by $Y$ and $Y_\varepsilon$ the self-adjoint operators associated with $y(w)$ and $y_\varepsilon(w)$, respectively. In particular,
the condition (III) implies that $|\Theta_\varepsilon'(s)| \to |\Theta'(s)|$ pointwise,
as $\varepsilon \to 0$. Thus, for each  $w \in C_0^\infty(\mathbb R)$,
$Y_\varepsilon w \to  Y w$, as $\varepsilon \to 0$.
Since $C_0^\infty(\mathbb R)$ is a core of $Y$,
given a constant $c> 0$,
\begin{equation}\label{propsolomy001}
(Y_\varepsilon + c \Id)^{-1} u \to (Y+ c \Id)^{-1} u, \quad \forall u \in L^2(\mathbb R).
\end{equation}

Now, denote by ${\cal C}$ the ideal of all compact operators
in the algebra of all bounded operators in $L^2(\mathbb R)$. Fix $\varepsilon^* > 0$ small enough. By (\ref{conthetdernu12}) and  (V), one has

\begin{equation}\label{propsolomy002}
{\rm dist} ((Y_{\varepsilon^*} + c \Id)^{-1}, {\cal C}) \leq \left(\liminf_{|s| \to \infty} \frac{|\Theta'_{\varepsilon^*}(s)|^2}{2} \right)^{-1} 
\leq 2 (\varepsilon^*)^{2a}.
\end{equation}
The condition (VII) implies that
\begin{equation}\label{propsolomy003}
(Y_\varepsilon + c \Id)^{-1} \leq (Y_{\varepsilon^{*}}+c\Id)^{-1},
\end{equation}
for all $\varepsilon < \varepsilon^*$.

As a consequence of (\ref{propsolomy001}), (\ref{propsolomy002}) and (\ref{propsolomy003}),
Proposition 5.3 of \cite{solomyak}  ensures that 
\[\limsup_{\varepsilon \to 0} \|(Y_\varepsilon + c \Id)^{-1} - (Y + c \Id)^{-1}\| \leq 2 (\varepsilon^*)^{2a}.\]
Taking $\varepsilon^* \to 0$, 
\[\|(Y_\varepsilon + c \Id)^{-1} - (Y + c \Id)^{-1}\| \to 0, \quad \hbox{as} \quad \varepsilon \to 0.\] 
Then, we obtain (\ref{thetainfsupps}). 
\end{proof}

\smallskip

\begin{proof}[\bf Proof of Theorem \ref{theothinstripdiv}]
It just to apply inequalities (\ref{upperinvsev}) and (\ref{ineqlower}), and Proposition \ref{proprooftheoint02}.
\end{proof}

\section{Appendix}\label{appendix001}

\subsection*{Stability of the essential spectrum}

The results of this appendix are simple adaptations of Lemma 4.1 and Proposition 4.2 of \cite{davidbriet},
and Lemma 4.2 of \cite{durand}.

\begin{Lemma}\label{caracterizacaoespectro}
A real number $\lambda$ belongs to the essential spectrum of 
$C_\varepsilon$ if and only if there exists a sequence 
$\{\psi_n\}_{n \in \mathbb N} \subset \dom c_\varepsilon$ satisfying the following conditions:
\begin{enumerate}
		\item[$(i)$] $\|\psi_n\| = 1,$ for all $n \in \mathbb N;$
		\item[$(ii)$] $(C_\varepsilon - \lambda\Id) \psi_n \to 0,$ as  $n\to\infty$, in the norm of the dual space 
		$\left(\dom c_\varepsilon\right)^*;$
		\item[$(iii)$] $\operatorname{supp} \psi_n \subset \Lambda\backslash (-n,n)\times(-1,1),$ for all $n \in \mathbb N.$
\end{enumerate} 
\end{Lemma}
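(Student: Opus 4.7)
The plan is to prove the two implications separately, adapting standard cut-off arguments as in \cite{davidbriet, durand}.

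For the $(\Leftarrow)$ direction, I first note that any sequence satisfying (iii) converges weakly to $0$ in $L^2(\Lambda)$: for every $f \in L^2(\Lambda)$,
\[
|\langle \psi_n, f\rangle| \leq \|\psi_n\|_{L^2}\,\|f\|_{L^2(\Lambda \setminus (-n,n)\times(-1,1))} \xrightarrow[n\to\infty]{} 0
\]
by dominated convergence. Combined with (i) and (ii), this exhibits $\{\psi_n\}$ as a singular Weyl sequence for $\lambda$, forcing $\lambda \in \sigma_{ess}(C_\varepsilon)$.

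For the harder $(\Rightarrow)$ direction, I would start from $\lambda \in \sigma_{ess}(C_\varepsilon)$ and invoke the form version of Weyl's criterion to obtain $\{\phi_n\} \subset \dom c_\varepsilon$ with $\|\phi_n\|=1$, $\phi_n \rightharpoonup 0$ in $L^2(\Lambda)$, and $(C_\varepsilon - \lambda)\phi_n \to 0$ in $(\dom c_\varepsilon)^*$. The identity $c_\varepsilon(\phi_n) = \lambda + \langle (C_\varepsilon - \lambda)\phi_n, \phi_n\rangle \to \lambda$ shows that $\{\phi_n\}$ is bounded in $H^1_0(\Lambda)$. To force condition (iii), fix smooth cutoffs $\chi_R : \mathbb{R} \to [0,1]$ with $\chi_R \equiv 0$ on $[-R,R]$, $\chi_R \equiv 1$ outside $[-2R,2R]$, and $|\chi_R'| \leq 2/R$, and set $\tilde\phi_n := \chi_{R_n}(s)\phi_n$ for some $R_n \geq n$ to be chosen. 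By Rellich compactness on each bounded rectangle $(-2R,2R)\times(-1,1)$, the weakly-null $H^1$-bounded sequence $\{\phi_n\}$ converges strongly to $0$ in $L^2((-R,R)\times(-1,1))$ for every fixed $R$; a diagonal extraction then yields $R_n \to \infty$ with $\|(1-\chi_{R_n})\phi_n\|_{L^2} \to 0$, so $\|\tilde\phi_n\| \to 1$ and, after renormalization, (i) holds. Condition (iii) is automatic from $\mathrm{supp}\,\chi_{R_n} \subset \{|s| \geq R_n\}$.

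The main technical step is (ii). Writing $D_s\psi := \partial_s\psi - (\partial_s f_\varepsilon)/(2f_\varepsilon)\,\psi$, a direct computation using the explicit form of $c_\varepsilon$ gives
\[
c_\varepsilon(\tilde\phi_n,\varphi) - c_\varepsilon(\phi_n, \chi_{R_n}\varphi) = \int_\Lambda \frac{\chi_{R_n}'(s)}{f_\varepsilon^2}\,\bigl[\phi_n\,\overline{D_s\varphi} - D_s\phi_n\,\overline{\varphi}\bigr]\,\ds\dt,
\]
which, thanks to $|\chi_{R_n}'| \leq 2/R_n$, the uniform bounds on $f_\varepsilon, 1/f_\varepsilon, \partial_s f_\varepsilon$ coming from (\ref{mainassuption}), and the $H^1_0$-boundedness of $\phi_n$, is $O(1/R_n)$ uniformly for $\|\varphi\|_{H^1_0} \leq 1$. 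Meanwhile,
\[
\bigl|c_\varepsilon(\phi_n, \chi_{R_n}\varphi) - \lambda\langle\phi_n, \chi_{R_n}\varphi\rangle\bigr| \leq \|(C_\varepsilon - \lambda)\phi_n\|_{(\dom c_\varepsilon)^*}\,\|\chi_{R_n}\varphi\|_{H^1_0} = o(1),
\]
while $\langle\tilde\phi_n,\varphi\rangle = \langle\phi_n,\chi_{R_n}\varphi\rangle$. Combining these estimates gives $(C_\varepsilon - \lambda)\tilde\phi_n \to 0$ in $(\dom c_\varepsilon)^*$. The main obstacle is choosing $R_n$ to simultaneously kill the cutoff loss $\|(1-\chi_{R_n})\phi_n\|_{L^2}$ and the $O(1/R_n)$ commutator; since the two error bounds decouple and both tend to zero as $R_n \to \infty$, a standard diagonal extraction resolves the tension and delivers (i)-(iii).
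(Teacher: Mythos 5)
Your argument is correct in substance and follows essentially the same strategy as the paper: both directions rest on the form version of Weyl's criterion with weakly null singular sequences (the paper quotes it as Theorem 5 of \cite{davidlu}), and the nontrivial implication is obtained by multiplying the Weyl sequence by cutoffs vanishing on growing rectangles and controlling the resulting error. The two places where you deviate are technical rather than structural, and both are legitimate: (a) you obtain the localization $\|(1-\chi_{R})\phi_n\|\to 0$ (at fixed $R$, as $n\to\infty$) from $H^1_0$-boundedness of the Weyl sequence plus Rellich on bounded rectangles, whereas the paper gets it from compactness of $(1-\eta_k)(C_\varepsilon+\Id)^{-1}$ combined with the resolvent identity $\xi_{n_k}=(C_\varepsilon+\Id)^{-1}(C_\varepsilon-\lambda\Id)\xi_{n_k}+(\lambda+1)(C_\varepsilon+\Id)^{-1}\xi_{n_k}$; your route is slightly more elementary, the paper's avoids discussing $H^1$-bounds of the Weyl sequence; (b) your commutator identity keeps the sesquilinear form symmetric and only involves $\chi'$, while the paper integrates by parts and must also estimate $\partial_s^2\eta_k$; either works, since under (\ref{mainassuption}) the coefficients $f_\varepsilon^{-2}$, $\partial_s f_\varepsilon/f_\varepsilon$, $V_\varepsilon$ are uniformly bounded. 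One point in your write-up needs repair, though it is a bookkeeping issue rather than a missing idea: as literally stated, choosing $R_n\geq n$ for the \emph{same} index $n$ and asserting $\|(1-\chi_{R_n})\phi_n\|\to 0$ is false in general (the mass of $\phi_n$ may sit at distance comparable to $n$ from the origin), and your closing sentence has the limits backwards --- the cutoff loss does \emph{not} tend to zero as $R_n\to\infty$ at fixed $n$; it tends to zero as $n\to\infty$ at fixed $R$. The diagonal extraction must therefore be read as the paper implements it: for each $k$ pick $n_k$ with the local mass on $(-2k,2k)\times(-1,1)$ and the dual norm in condition $(ii)$ both below $1/k$, set $\psi_k:=\chi_k\phi_{n_k}/\|\chi_k\phi_{n_k}\|$, and re-index; since the lemma only asks for the existence of \emph{some} sequence, this relabelled subsequence fulfills $(i)$--$(iii)$ and your estimates then close the argument.
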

\begin{proof}
It is known that $\lambda \in \sigma_{ess}(C_\varepsilon)$ if and only if there exists a sequence
$\{\xi_n\}_{n \in \mathbb N} \subset \dom c_\varepsilon$ satisfying $(i),$ $(ii)$, and
$(iii')$\, $\xi_n \rightharpoonup 0$  in  $L^2(\Lambda)$, as  $n \to 0$;
see, e.g., Theorem 5 in \cite{davidlu}.
Let $\{\psi_n\}_{n \in \mathbb N}$ be a sequence satisfying the conditions $(i),$ $(ii)$ and $(iii)$.
Consequently, it satisfies $(i),$ $(ii)$ and $(iii')$.

Now, let $\{\xi_n\}_{n \in \mathbb N} \subset \dom c_\varepsilon$ be a sequence satisfying
$(i)$, $(ii)$, and $(iii')$.
Take  $\eta\in C^\infty(\mathbb R; \mathbb R)$, $0 \leq \eta \leq 1$, $\eta = 0$ in $[-1,1]$, and $\eta = 1$ in $\mathbb R\backslash(-2,2)$. 
Define the sequence
$\{\eta_k\}_{k \in \mathbb N} \subset C^\infty(\Lambda)$,  where $\eta_k(s,t) := \eta(s/k)$.
Since $(1-\eta_k)(C_\varepsilon+\Id)^{-1}$ is compact in $L^2(\Lambda)$, by $(iii')$, we have
$(1-\eta_k)(C_\varepsilon+\Id)^{-1} \xi_n \to 0$ in
$L^2(\Lambda)$, as $n\to \infty$, for all $k\in \mathbb N$. 
Then there exists a subsequence $\{\xi_{n_k}\}_{k \in \mathbb N}$ of $\{\xi_n\}_{n \in \mathbb N}$ so that
$(1-\eta_k)(C_\varepsilon+\Id)^{-1} \xi_{n_k} \to 0$ in $L^2(\Lambda)$, as $k\to\infty$.
By writing 
$$\xi_{n_k} = (C_\varepsilon + \Id)^{-1} (C_\varepsilon - \lambda \Id)
\xi_{n_k} + (\lambda + 1)(C_\varepsilon + \Id)^{-1} \xi_{n_k},$$
and using $(ii)$, it follows that $(1-\eta_k) \xi_{n_k} \to 0$ in $L^2(\Lambda)$, as $k\to\infty$. 
Thus, we can assume that $\|\eta_k \xi_{n_k}\| \geq 1/2,$ for all $k \in \mathbb N.$ 
Finally, define
\begin{equation*}
\psi_k := \frac{\eta_k \xi_{n_k}}{\|\eta_k \xi_{n_k}\|}, \quad k \in \mathbb N.
\end{equation*}
The sequence $\{\psi_k\}_{k \in \mathbb N} \subset \dom c_\varepsilon$ satisfies the conditions $(i)$ and $(iii)$. 
It remains to verify  $(ii)$, i.e., 
\begin{equation}\label{condicao2}
\sup_{\substack{\phi \in {H}^{1}_{0}(\Lambda)\\ \phi\neq 0}}
\frac{|c_\varepsilon (\phi,\psi_k) -\lambda \langle\phi,\psi_k\rangle|}{\|\phi\|_{+}} \longrightarrow 0,
\end{equation}
as $k\to \infty$,
where $\|\phi\|_+^2 := c_\varepsilon(\phi)+\|\phi\|^2$.

Some calculations show that 
\begin{align*}
c_\varepsilon (\phi,\eta_k \xi_{n_k}) -\lambda \langle\phi,\eta_k\xi_{n_k}\rangle 
= & 
\, c_\varepsilon (\eta_k \phi, \xi_{n_k}) -\lambda \langle\eta_k \phi,\xi_{n_k}\rangle 
+ 
\int_\Lambda \frac{1}{f_\varepsilon^2}\phi\,\partial_s^2\eta_k\, \overline{\xi_{n_k}} \ds \dt \\
& +
2\int_\Lambda \frac{1}{f_\varepsilon^2} 
\left( \partial_s \phi - \frac{\partial_s f_\varepsilon }{f_\varepsilon}\phi\right) \partial_s\eta_k\, \overline{\xi_{n_k}} \ds \dt.
\end{align*}
Since $\{\xi_{n_k}\}_{k \in \mathbb N}$ satisfies $(ii)$, we have
\begin{equation*}
\sup_{\substack{\phi \in {H}^{1}_{0}(\Lambda)\\ \phi\neq 0}}
\frac{| c_\varepsilon (\eta_k \phi, \xi_{n_k}) -\lambda \langle\eta_k \phi,\xi_{n_k}\rangle|}{\|\phi\|_{+}} \,
\leq
\sup_{\substack{\phi \in {H}^{1}_{0}(\Lambda)\\ \eta_k \phi\neq 0}}
\frac{| c_\varepsilon (\eta_k \phi, \xi_{n_k}) -\lambda \langle\eta_k \phi,\xi_{n_k}\rangle |}{\|\eta_k \phi\|_{+}} \longrightarrow 0,
\end{equation*}
as $k \to \infty$.
By H\"older's Inequality and by the estimates $\|\phi\|\leq\|\phi\|_+$ and $c_\varepsilon(\phi)\leq\|\phi\|_+^2$, 
we get
\begin{equation*}
\sup_{\substack{\phi \in {H}^{1}_{0}(\Lambda)\\ \phi\neq 0}}
\left\{\frac{1}{\|\phi\|_{+}} \int_\Lambda \frac{1}{f_\varepsilon^2}|\phi| |\partial_s^2\eta_k| |\xi_{n_k}| \ds \dt \right\} 
\leq \,
\|\partial_s^2 \eta_k\|_{L^\infty(\Lambda)} = k^{-2}\|\eta''\|_{L^\infty(\mathbb R)} \longrightarrow 0, 
\end{equation*}
and 
\begin{equation*}
\sup_{\substack{\phi \in {H}^{1}_{0}(\Lambda)\\ \phi\neq 0}}
\left\{\frac{1}{\|\phi\|_{+}}  
\int_\Lambda \frac{1}{f_\varepsilon^2} 
\left| \partial_s \phi - \frac{\partial_s f_\varepsilon }{2 f_\varepsilon}\phi\right| |\partial_s\eta_k| |\xi_{n_k}|  \ds \dt\right\}\, 
\leq \,
\left\|\partial_s \eta_k\right\|_{L^\infty(\Lambda)} = 	k^{-1}\|\eta'\|_{L^\infty(\mathbb R)} \longrightarrow 0,
\end{equation*}
as $k\to\infty$.
Finally, since $\beta, \beta' \in L^\infty(\mathbb R)$, we have
\begin{equation*}
\sup_{\substack{\phi \in {H}^{1}_{0}(\Lambda)\\ \phi\neq 0}}
\left\{\frac{1}{\|\phi\|_{+}} 
\int_\Lambda
\left|\frac{\partial_s f_\varepsilon }{f_\varepsilon^3}\right|
|\phi| |\partial_s\eta_k| |\xi_{n_k}| \ds \dt\right\}\, 
\leq \,
\left\|\partial_s \eta_k\right\|_{L^\infty(\Lambda)} = 	k^{-1}\|\eta'\|_{L^\infty(\mathbb R)} \longrightarrow 0,
\end{equation*}
as $k\to\infty$.
Then,  (\ref{condicao2}) holds true.
\end{proof}

\begin{Remark}
{\rm The same conclusion of Lemma \ref{caracterizacaoespectro} holds true for the
operator $D_\varepsilon$.}
\end{Remark}

\begin{proof}[\bf Proof of Proposition \ref{proposition1}]
Let $\lambda\in \sigma_{ess}(D_\varepsilon)$, 
then there exists a sequence $\{\psi_n\}_{n \in \mathbb N} \subset \dom d_\varepsilon$ so that 
$\|\psi_n\| = 1,$
$\operatorname{supp} \psi_n \subset \Lambda\backslash (-n,n)\times(-1,1),$
for all $n \in \mathbb N,$
and
$(D_\varepsilon - \lambda\Id) \psi_n \to 0,$ as  $n\to\infty$, in the norm of the dual space 
$\left(\dom d_\varepsilon\right)^*$. 
Recall  $\operatorname{supp} \beta \subset [-s_0, s_0]$, for some $s_0 > 0$. Take $n_0 \in \mathbb N$ so that $n_0 > s_0$.
For each $\phi \in H^1_0(\Lambda)$, one has
\begin{equation*}
c_\varepsilon (\phi,\psi_n) -\lambda \langle\phi,\psi_n\rangle =
d_\varepsilon (\phi,\psi_n) -\lambda \langle\phi,\psi_n\rangle,
\end{equation*}
for all $n\geq n_0$. Consequently,
\begin{equation*}
\sup_{\substack{\phi \in {H}^{1}_{0}(\Lambda)\\ \phi\neq 0}}
\frac{|c_\varepsilon (\phi,\psi_n) -\lambda \langle\phi,\psi_n\rangle|}{\|\phi\|_+} \longrightarrow 0,
\end{equation*}
as $n\to \infty$.
The Lemma \ref{caracterizacaoespectro}
implies that $\lambda\in \sigma_{ess}(C_\varepsilon)$.
In a similar way, it is possible to show that $\sigma_{ess}(C_\varepsilon) \subset \sigma_{ess}(D_\varepsilon)$.
\end{proof}

\vspace{0.5cm}
\noindent
RAFAEL T. AMORIM, Departamento de Matem\'atica,\\ 
Universidade Federal de S\~ao Carlos - UFSCar, S\~ao Carlos, SP, 13565-905, Brazil\\
e-mail: \url{rafaelamorim@estudante.ufscar.br}

\vspace{0.5cm}
\noindent
ALESSANDRA A. VERRI, Departamento de Matem\'atica, \\
Universidade Federal de S\~ao Carlos - UFSCar, S\~ao Carlos, SP, 13565-905, Brazil\\
e-mail: \url{alessandraverri@ufscar.br}

\end{document}